\renewcommand{\mathbf}{\mathbold}
\newcommand{\arxiv}[1]{}
\theoremstyle{plain}
  \newtheorem{theorem}{Theorem}[section]
  \newtheorem{corollary}[theorem]{Corollary}
  \newtheorem{lemma}[theorem]{Lemma}
  \newtheorem{proposition}[theorem]{Proposition}
\theoremstyle{definition}
  \newtheorem{definition}[theorem]{Definition}
  \newtheorem{ex}[theorem]{Example}
  \newtheorem{remark}[theorem]{Remark}
  \newenvironment{example}{\begin{ex}}{\end{ex}}
  \newcommand{\set}[1]{\left\{#1\right\}}
  \newcommand{\R}{\mathbb{R}}
  \newcommand{\Z}{\mathbb{Z}}
  \newcommand{\category}[1]{\mathrm{\mbox{\bf #1}}}
	\newcommand{\Face}{{\category{Fc}}}
	\newcommand{\Ch}{{\category{Ch}}}
	\newcommand{\Mod}{\category{Mod}}
		\newcommand{\codim}{\text{\em codim}}
	\newcommand{\op}{} %\newcommand{\op}{^{\mathrm{op}}}
	\newcommand{\HBM}{{H}_\mathrm{c}}
	\newcommand{\CBM}{{C}_\mathrm{c}}
	\newcommand{\bA}{\mathbf{A}}
	\newcommand{\bC}{\mathbf{C}}
	\newcommand{\bF}{\mathbf{F}}
	\newcommand{\bG}{\mathbf{G}}
	\newcommand{\bK}{\mathbf{K}}
	\newcommand{\bL}{\mathbf{L}}
		\newcommand{\bP}{\mathbf{P}}
	\newcommand{\bS}{\mathbf{S}}
	\newcommand{\cL}{\mathrm{L}}
	\newcommand{\cU}{\mathrm{U}}
	\newcommand{\cV}{\mathrm{V}}
	\newcommand{\cX}{\mathrm{X}}
	\newcommand{\cY}{\mathrm{Y}}
	\newcommand{\cZ}{\mathrm{Z}}
	\newcommand{\surj}{\twoheadrightarrow}
	\newcommand{\inj}{\hookrightarrow}
	\newcommand{\Cone}{\mathscr{C}}
\newcommand{\st}{\category{st}}
\newcommand{\lk}{\category{lk}}
	\newcommand{\ip}[1]{\langle{#1}\rangle}
	\newcommand{\Fc}[1]{\Fc[#1]} % Face category
	\newcommand{\inc}{\hookrightarrow}
	\newcommand{\pro}{\twoheadrightarrow}
 \newcommand{\under}{\mathbin{\mkern-3mu/\mkern-6mu/}\mkern-3mu}
	\newcommand{\fiber}{\under}
\newcommand{\tab}{\hspace{15pt}}
\newcommand{\geqdn}{\mathrel{\text{\rotatebox[origin=c]{-90}{$\geq$}}}}
\newcommand{\roteq}{\mathrel{\text{\rotatebox[origin=c]{90}{$=$}}}}
\title{{Local Cohomology and Stratification}}
\author{Vidit Nanda}
\date{}
\begin{document}

\begin{abstract}
We outline an algorithm to recover the canonical (or, coarsest) stratification of a given finite-dimensional regular CW complex into cohomology manifolds, each of which is a union of cells. The construction proceeds by iteratively localizing the poset of cells about a family of subposets; these subposets are in turn determined by a collection of cosheaves which capture variations in cohomology of cellular neighborhoods across the underlying complex. The result is a nested sequence of categories, each containing all the cells as its set of objects, with the property that two cells are isomorphic in the last category if and only if they lie in the same canonical stratum. The entire process  is amenable to efficient distributed computation.

\medskip
{\footnotesize
\noindent {\bf Keywords: } Canonical stratification, local cohomology. \\
\noindent {\bf Mathematics Subject Classification:} 52S60, 55N30, 18E35 \\
\noindent {\em Communicated by Herbert Edelsbrunnner.}
}
\end{abstract}

\maketitle

\vspace{-.5in}

\section{Introduction}

Setting aside technicalities for the moment, an $n$-dimensional {\bf stratification} of a given topological space $\cX$ is a nested sequence of closed subspaces
\[
\varnothing = \cX_{-1} \subset \cX_0 \subset \cdots \subset \cX_n = \cX
\]
where each successive difference $\cX_d - \cX_{d-1}$ resembles a (possibly empty) $d$-dimensional manifold whose connected components are the $d$-dimensional {\em strata}. The details which have been  suppressed are designed to guarantee the uniformity of small neighborhoods around points in a given stratum. Stratified spaces are of fundamental importance in any branch of mathematics which seriously confronts singularities. Every topological manifold admits a straightforward stratification into its connected components; less trivially, the following spaces all admit, and are often analyzed using, stratifications: finite CW complexes, quotients of properly discontinuous Lie group actions on smooth manifolds, (semi)algebraic varieties and (sub)analytic sets.

The $2$-dimensional singular space below -- let's call it $\cY$ -- is built by pinching a torus along a meridian and attaching a disk across an equator:
\begin{figure}[h!]
\includegraphics[scale = 0.3]{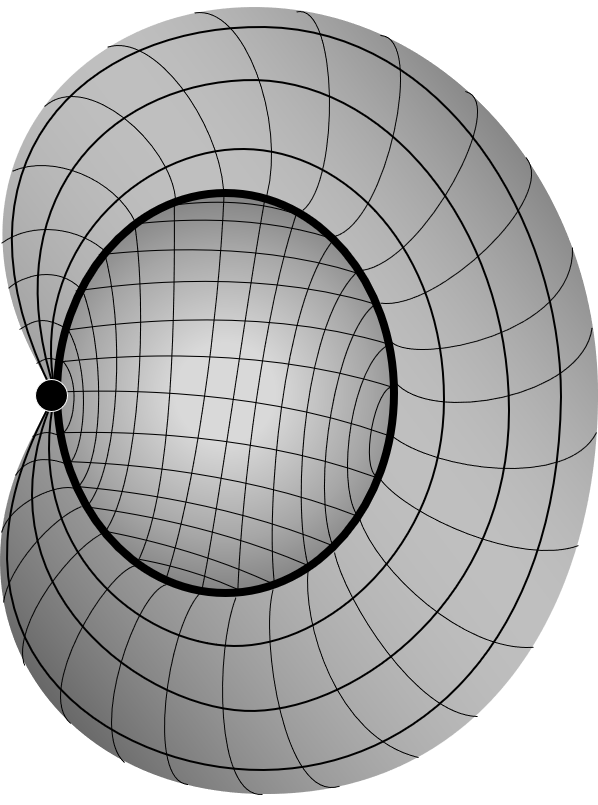}
\end{figure}

Any regular CW structure, such as the illustrated decomposition into little squares, constitutes a stratification of $\cY$ where $d$-dimensional strata are precisely the $d$-cells; passing to a subdivision further refines this stratification in the sense that every new cell is entirely contained in the interior of an old cell. On the other hand, one can discover a much coarser stratification by examining the topology of small neighborhoods around points of $\cY$. Up to homeomorphism, these fall into three different classes depending on whether the central point is at the pinch, on the singular equator, or on one of the manifold-like two-dimensional regions:
\begin{figure}[h!]
	\includegraphics[scale = 0.25]{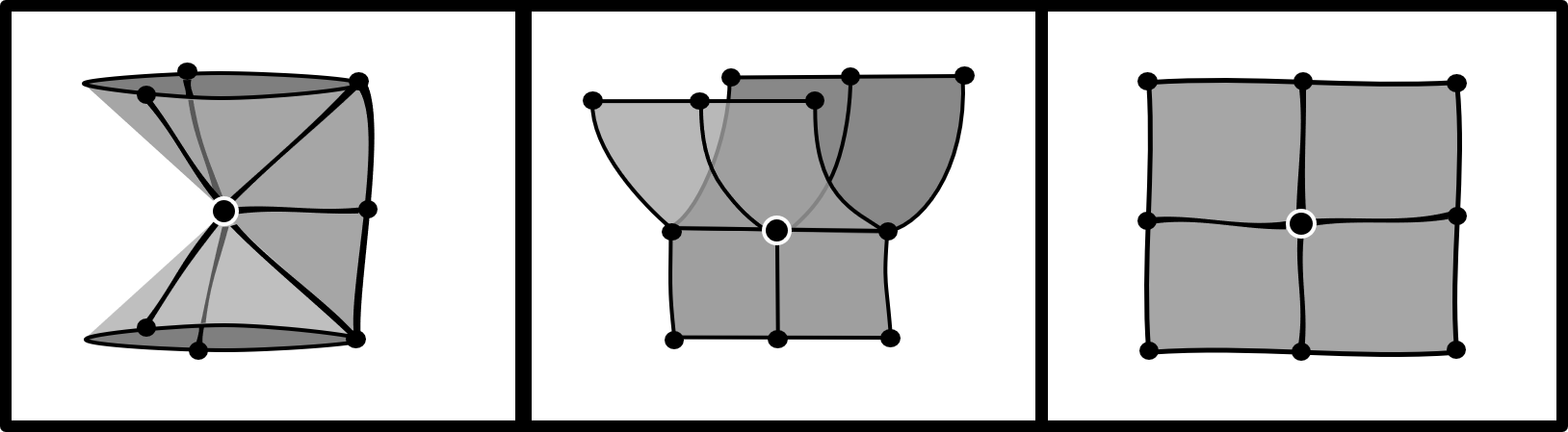}
\end{figure}

The neighborhoods above deformation-retract onto their central vertices and are therefore contractible; however, their one-point compactifications (obtained by collapsing their boundaries to points) are new stratified spaces with potentially interesting topology. The compactified neighborhood around the pinch-point is homeomorphic to two $2$-spheres joined at their north and south poles with a spanning disk across the middle. The compactified neighborhood of any point in the singular equator resembles a $2$-sphere whose interior has been partitioned in two by a disk. And finally, the compactified neighborhood around any point in either of the two-dimensional regions is homeomorphic to an ordinary $2$-sphere:  
\begin{figure}[h!]
	\includegraphics[scale = 0.25]{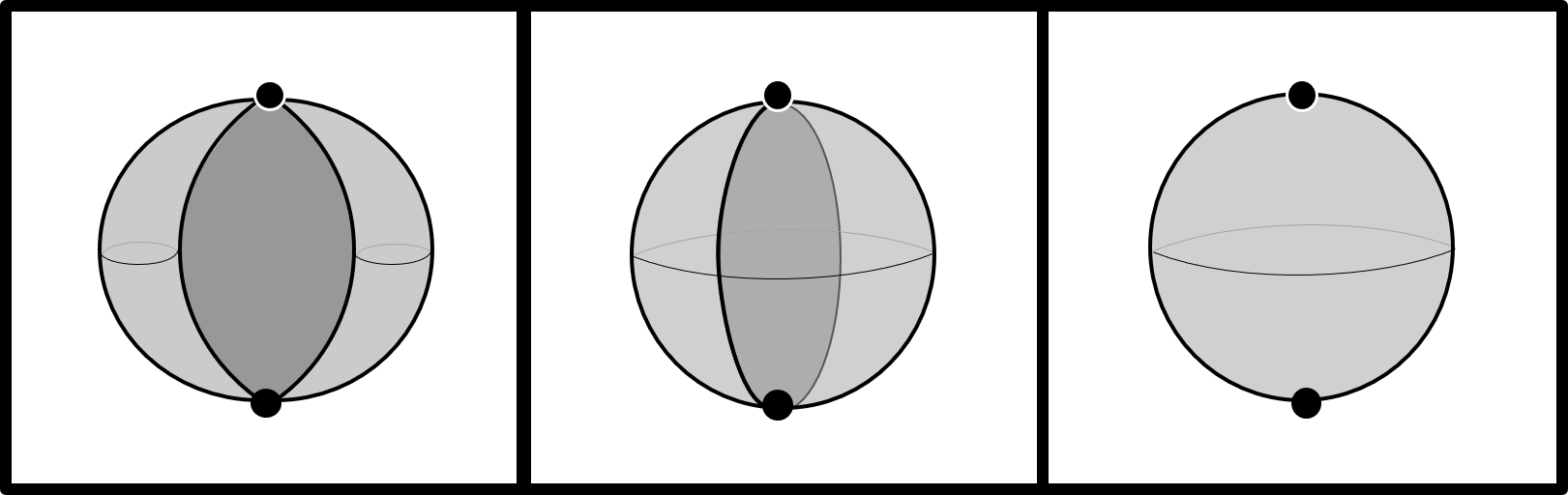}
\end{figure}

Our main result here involves algorithmically recovering the coarsest stratification of a finite regular CW complex where all strata are unions of cells. This is called the {\bf canonical stratification} of the complex; its existence and uniqueness for a special class of spaces (called  {\em pseudomanifolds}) plays a central role in Goresky and MacPherson's proof of the topological invariance of intersection homology \cite[Sec 4]{goresky:macpherson:83}. Our argument, much like theirs, has an intuitive geometric core but invokes algebraic and categorical machinery. For the purposes of this introductory section, we focus on geometry and ask: given a finite cellulation of $\cY$, how might one identify the canonical strata and determine which cells lie in each canonical stratum, as shown below?

\begin{figure}[h!]
	\includegraphics[scale = 0.3]{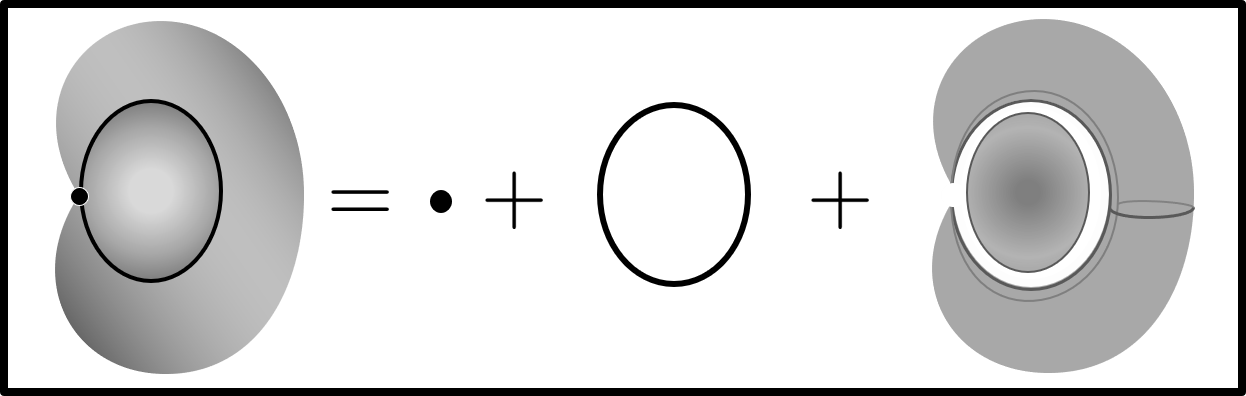}
\end{figure}

\noindent (Since the pinch point has a different compactified neighborhood than a generic point on the singular equator, it must constitute a separate stratum.)

In light of the discussion above, one hopes to recover canonical strata by clustering together cells  whenever they exhibit similar compactified neighborhoods. But already in this example, we encounter two significant difficulties: first, the compactified neighborhoods do not distinguish cells in the two $2$-strata from each other. The second difficulty is somewhat subtler --- although the compactified neighborhoods of cells in the $0$-stratum and $1$-stratum are not homeomorphic, both are homotopy-equivalent to a wedge of two $2$-spheres. Therefore, they can not be distinguished by weaker, more computable topological invariants such as cohomology. We tackle the first problem by constructing a {\em complex of cosheaves} which encodes how local topology varies across cells, and we bypass the second problem by working exclusively in the category of {\em cohomologically} stratified spaces. 

Consider the task of determining the canonical stratification of a finite-dimensional regular CW complex $\cX$ into $R$-cohomology manifolds, where $R$ is a fixed nontrivial commutative ring with unity.  In this case, our complex of cosheaves assumes a particularly appealing form: it is a functor \[\bL^\bullet:\Face\op(\cX) \to \Ch(R)\] from the poset of cells in $\cX$ (where $x > y$ denotes that $y$ is a face of $x$, or equivalently, that $x$ is a co-face of $y$) to the category of lower-bounded $R$-module cochain complexes. To each cell $x$ of $\cX$, it assigns a cochain complex $\bL^\bullet(x)$:
\[
\xymatrixcolsep{0.4in}
\xymatrix{
\bL^0(x) \ar@{->}[r]^{\beta^0_x}  & \bL^1(x) \ar@{->}[r]^{\beta^1_x}  & \bL^2(x) \ar@{->}[r]^{\beta^2_x} & \cdots,
}
\] 
where $\bL^d(x)$ is the free $R$-module generated by all $d$-dimensional co-faces of $x$ --- in particular, the module $\bL^d(x)$ is trivial for $d < \dim x$ and $d > \dim \cX$. The co-differentials $\beta^\bullet_x$ are inherited verbatim from incidence degrees among cells of $\cX$ taking values in $R$. Thus, $\bL^\bullet(x)$ computes the (reduced) cohomology of a compactified small open neighborhood around $x$ in $\cX$. Given another cell $y$ satisfying $x \geq y$, there is an inclusion map 
\[
\bL^\bullet(x \geq y):\bL^\bullet(x) \inj \bL^\bullet(y)
\] arising from the fact that co-faces of $x$ are also co-faces of $y$. Here is a simple version of our main result (the full statement has been recorded in Theorem \ref{thm:full}).

\begin{theorem}
\label{thm:main}
There is a category $\bS$ obtained from $\Face\op(\cX)$ by formally inverting a particular subset of those face relations $x \geq y$ for which $\bL^\bullet(x \geq y)$ induces  isomorphisms on cohomology; two cells lie in the same canonical stratum of $\cX$ if and only if they are isomorphic in $\bS$.
\end{theorem}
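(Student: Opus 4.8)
The plan is to build the category $\bS$ as a localization of $\Face^{\op}(\cX)$ in stages, mirroring the filtration $\cX_{-1}\subset\cX_0\subset\cdots\subset\cX_n$ of the canonical stratification, and to show at each stage that the isomorphism classes in the localized category refine (and eventually equal) the strata. First I would recall the standard fact that the canonical stratification is characterized \emph{locally}: a cell $x$ lies in the stratum of a face $y\leq x$ precisely when the local topology (here, the compactified neighborhood, computed by $\bL^\bullet$) is ``constant'' along the family of cells between $y$ and $x$ in an appropriate sense. The cosheaf-theoretic translation of this is that $\bL^\bullet(x\geq y)$ should be a cohomology isomorphism --- but as the introduction stresses, this condition alone is too weak (the $0$-stratum and $1$-stratum in $\cY$ have homotopy-equivalent compactified neighborhoods). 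So the subset of face relations to be inverted cannot be ``all cohomology equivalences''; it must be cut down. The natural candidate is to invert $x\geq y$ only when $\bL^\bullet$ restricted to the \emph{entire closed star relation} — i.e. compatibly across all $z$ with $x\geq z\geq y$, and stably under the cosheaf corestriction maps — is an equivalence; equivalently, one inverts a relation only when it is a cohomology equivalence \emph{and} remains so after all the localizations performed in earlier stages. This is exactly the ``particular subset'' the theorem refers to, and making this precise is where I expect the real work to lie.

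Concretely, I would proceed by induction on the dimension, constructing categories
\[
\Face^{\op}(\cX) = \bS_{-1} \to \bS_0 \to \bS_1 \to \cdots \to \bS_n = \bS,
\]
where $\bS_d$ is obtained from $\bS_{d-1}$ by inverting those arrows (images of face relations) along which the relevant cosheaf data is locally constant after the first $d$ localizations. At each step I would verify two things: (i) \emph{soundness} — if two cells become isomorphic in $\bS_d$ then they lie in a common stratum of dimension $\geq n-d$ (or the appropriate indexing), which follows because each inverted arrow connects cells with the same local invariant and the canonical stratification is the coarsest decomposition into cohomology manifolds; and (ii) \emph{completeness} — if two cells $x,x'$ lie in the same canonical stratum, then they are connected by a zig-zag of inverted face relations in $\bS$, which I would get by a connectedness argument inside the stratum: any stratum of a regular CW complex, being a cohomology manifold that is a union of cells, is connected through codimension-one incidences within itself, and each such incidence $x\geq y$ (or its opposite) is inverted because $\bL^\bullet$ is genuinely locally constant there. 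The zig-zag then yields an isomorphism $x\cong x'$ in $\bS$.

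For soundness the key lemma to isolate is that inverting a cohomology equivalence $\bL^\bullet(x\geq y)$ which is ``stably'' an equivalence forces $x$ and $y$ into the same stratum: this is where one must use that we work in the \emph{cohomologically} stratified category, so that the Goresky--MacPherson-style local characterization of strata can be detected by $\bL^\bullet$ rather than by the full homeomorphism type of neighborhoods. I would cite the local normal-triviality of the canonical stratification and translate it into the statement that $x\sim y$ in a stratum iff the cosheaf restriction induces iso on all local cohomology groups compatibly with the face poset structure below $x$. For completeness the technical point is to show the chosen subset of relations is large enough: one must check that within a single stratum the face poset is connected by relations that \emph{are} in the inverted set — i.e. that the iterative localization never ``gets stuck'' and fails to invert an equivalence that is in fact locally constant.

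The main obstacle, as I see it, is pinning down exactly which face relations go into the inverted set so that both soundness and completeness hold simultaneously: invert too many (all cohomology equivalences) and soundness fails on examples like $\cY$; invert too few and completeness fails. The iterative/localizing construction is precisely the device for threading this needle — at stage $d$ one only trusts a cohomology equivalence if it has survived as an equivalence through stages $0,\dots,d-1$ — and the crux of the proof will be the inductive verification that this self-referential definition stabilizes to exactly the stratum equivalence relation. I expect this to rest on a careful analysis of how the cosheaf complex $\bL^\bullet$ and its corestriction maps interact with the localization, which is presumably carried out in the body of the paper via the homotopy-theoretic properties of $\Ch(R)$ and the combinatorics of the face poset; I would lean on those results to close the argument rather than reprove them here.
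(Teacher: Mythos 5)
Your high-level architecture (a staged localization of $\Face\op(\cX)$, working down from the top dimension, with a soundness and a completeness check at each stage) does match the paper's. But the proposal leaves the one thing the theorem actually hinges on --- \emph{which} face relations get inverted --- unresolved, and the tentative formulation you offer is not the right one. The paper's mechanism is concrete: at stage $d$ one deletes all cells already assigned to higher-dimensional canonical strata, obtaining a subcomplex $\cY_d$, and then \emph{recomputes} the local cohomology cosheaf $\bL_d^\bullet$ of $\cY_d$ from scratch; a relation $(x\geq y)$ is inverted at this stage only if it was a quasi-isomorphism at every earlier stage (the sets $E_\bullet$), if the stalk satisfies the dimensionality condition $H^\bullet\bL_d(y)\simeq H^\bullet R[n-d]$, and if \emph{every} coface $x'\geq y$ in $\cY_d$ also yields a quasi-isomorphism (upward closure). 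Your phrase ``remains an equivalence after the localizations performed in earlier stages'' refers to the original $\bL^\bullet$ and does not capture this: in the paper's cone-over-a-figure-eight example the relations $(w_*\geq p)$ at the cone point are quasi-isomorphisms for $\bL^\bullet_0$ and stay so; only recomputing local cohomology on the smaller complex $\cY_1$ distinguishes $p$ from generic points of the circles. Likewise your condition ``compatibly across all $z$ with $x\geq z\geq y$'' quantifies over cells between $x$ and $y$, not over all cofaces of $y$; Example \ref{ex:upclose} is exactly a counterexample to inverting without the full upward-closure condition, and the dimensionality clause is also indispensable (without it one would invert relations among cells whose compactified neighborhoods are, say, acyclic). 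So the ``real work'' you defer is not a technical verification of a definition you have essentially guessed; the definition itself is different.

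The second gap is logical: your soundness argument assumes the canonical stratification already exists and admits a local, $\bL^\bullet$-detectable characterization (``the canonical stratification is the coarsest decomposition\dots'', ``I would cite the local normal-triviality''), whereas existence and uniqueness of the canonical stratification is part of what the theorem establishes --- the paper explicitly constructs the filtration $\cY_\bullet$ and then \emph{verifies} the frontier and link axioms of Definition \ref{def:strat} for each isomorphism class (Proposition \ref{prop:isotostrat}). That verification is not a formality: it needs the monotonicity lemma (Lemma \ref{lem:acyclicity}) showing zigzags can never ascend from $\cY_j$ back into $\cY_i-\cY_j$, so that isomorphism classes stabilize along $\bS_0\inc\cdots\inc\bS_n$; it needs quasi-inverses for the backward arrows (available because the stalks are bounded complexes of free modules) to transport the link data along a zigzag; and it needs the quotient complexes $\bL_{i}^\bullet/\bL_{i+1}^\bullet$ together with a five-lemma argument to establish the frontier condition. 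None of these ingredients appears in the proposal, and the completeness direction likewise requires the refinability axiom (to show $\bL_j^\bullet(x\geq y)$ is a quasi-isomorphism for cells in a common stratum at every earlier stage $j<d$), not merely connectedness of the stratum through codimension-one incidences. As written, the proposal is a plausible plan that reproduces the paper's outline but leaves both the defining construction and the two substantive verifications unproven.
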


Before proceeding to the $\bL^\bullet$-induced definition of $\bS$ and a proof of this theorem, we highlight three salient features. First, the construction of $\bS$ is algorithmic and isomorphism classes of its objects are explicitly (and efficiently) computable. Second, the non-isomorphisms in $\bS$ reveal when a canonical stratum contains another in its boundary. And third, our construction actually yields a nested sequence of intermediate categories
\[
\bS_0 \inc \bS_1 \inc \cdots \inc \bS_{\dim \cX} = \bS,
\] each of which receives a functor from $\Face\op(\cX)$ compatible with the previous ones; and all the canonical strata of dimension exceeding $(\dim \cX - d)$ may be recovered from certain isomorphism classes in $\bS_d$.

This paper is organized as follows. Section \ref{sec:background} tersely collects relevant background material involving stratifications and cosheaves, while Section \ref{sec:lochom} describes the local cohomology complex of cosheaves $\bL^\bullet$ associated to a regular CW complex $\cX$. In Section \ref{sec:top} we use $\bL^\bullet$ to define an initial functor $\Face\op(\cX) \to \bS_0$ via categorical localization, and prove that the top-dimensional canonical strata of $\cX$ correspond bijectively with isomorphism classes of (images of) top-cells in $\bS_0$. Section \ref{sec:low} contains the heart of our argument: it concludes the proof of Theorem \ref{thm:main} by describing the inductive construction of $\Face\op(\cX) \to \bS_{d}$ from the previous $\Face\op(\cX) \to \bS_{d-1}$. The final Section \ref{sec:comp} may be read independently of the preceding ones --- it is devoted to computational matters, and includes an efficient distributed algorithm for recovering canonical strata in practice.

\subsection*{Context}

Our goal is to provide a principled topological pre-processor for various {\em manifold learning} algorithms. The fundamental aim of manifold learning is to automatically infer the intrinsic dimension of a  compact submanifold $M$ (often with boundary) of Euclidean space from a finite point sample $P \subset \R^n$ lying on, or at least near, $M$. An enormous amount of work has been done in this field, and we will make no attempt to even summarize it here --- the curious reader is referred to \cite{mfdlrn:08} and its myriad citations. In most cases, one constructs a graph around the points in $P$ by inserting all edges of Euclidean length bounded above by a parameter threshold $\epsilon > 0$. Shortest paths along this graph then serve as proxies for intrinsic geodesics of $M$ and are used to find distance-preserving projections to lower-dimensional subspaces of $\R^n$ via spectral methods. 

When $M$ is not a single manifold, but rather a union of different manifolds across several dimensions, our standard manifold learning methods necessarily result in over-fitting along regions where $M$ has low intrinsic dimension. Theorem \ref{thm:main} provides a remedy by conferring the ability to partition the Delaunay triangulation \cite[Ch 5]{compgeo}  around $P$ at radius $\epsilon$ into distinct clusters, each of which is guaranteed to lie on a cohomology manifold of known dimension\footnote{The probabilistic version of such a framework is described in \cite{bendich2}.}. These clusters may then be independently subjected to standard manifold learning techniques. Perhaps more important from a practical perspective is the fact that the computations involved in partitioning $P$ are easily distributed across several processors. And as a straightforward byproduct, one also gains the capacity to identify (those points of $P$ which lie in) every anomalous, singular region of $M$. 

There now exists a substantial body of work where filtered (co)homology groups of cell complexes built around data points play a central role \cite{carlsson:09}, \cite{ghrist:16}, \cite{edelsbrunner:harer:10}. With the knowledge of canonical strata of such cell complexes comes the prospect of efficiently computing far more refined topological invariants such as intersection cohomology \cite{goresky:macpherson:80} groups. Also accessible, thanks to suitable flavors of Morse theory \cite{nanda:15}, \cite{nanda:tamaki:tanaka:16}, is the ability to cluster canonical $d$-strata by $(d+1)$-dimensional cobordisms internal to the ambient dataset.

\subsection*{Acknowledgements}

I am indebted to Justin Curry, Rob Ghrist and Ulrike Tillmann for insightful discussions, and to Adam Brown and Yossi Bokor for carefully reading (and discovering oversights in) earlier drafts of this paper. I am also grateful to the two anonymous referees for their corrections, comments and suggestions for improvement. This work was partially supported by The Alan Turing Institute under the EPSRC grant EP/N510129/1 and by the Friends of the Institute for Advanced Study. It is dedicated to Mark Goresky and Bob MacPherson, whose efforts over four decades have built some of my favorite playgrounds.

\section{Preliminaries} \label{sec:background}

Our primary references for stratification theory are \cite{goresky:macpherson:88}, \cite{kirwan:88}, and \cite{weinberger:94}; for categorical localization see  \cite{dwyer:kan:80} and \cite{gabriel:zisman:67}; and for cellular (co)sheaves can be found in \cite{curry:13} and \cite{shepard:85}. The interplay between cell structures and general stratifications on the same underlying space has been explored thoroughly in \cite{tamaki:16}. Here we will be concerned entirely with cohomologically stratified spaces, which the reader may have encountered before in \cite[Sec 4.1]{goresky:macpherson:83}, \cite[Sec 3.3]{getz:goresky:12}, or \cite[Sec 5]{rourke:sanderson:99}.

\subsection{Cohomological stratifications}

Fix a non-trivial commutative ring $R$ with identity and recall that an {\bf $R$-cohomology manifold} of dimension $d \geq 0$ is a locally compact Hausdorff topological space $\cZ$ where each point $z$ has an open neighborhood $\cU_z \subset Z$ whose compactly-supported $R$-valued cohomology agrees with that of $d$-dimensional Euclidean space $\R^d$, i.e.,
\begin{align}
\label{eqn:cohmfd}
\HBM^\bullet\left(\cU_z; R\right) \simeq \begin{cases} R & \text{ if } \bullet = d, \\
																																			0 & \text{ otherwise. }
																								\end{cases}
\end{align}
It is essential to use compactly-supported (rather than ordinary singular) cohomology to detect $d$-dimensionality, since the usual cohomology of a $d$-dimensional open ball agrees with that of a point regardless of $d$. Readers unfamiliar with this notion may safely replace the compactly-supported cohomology of any open set encountered henceforth by the standard cohomology of its one-point compactification relative to the additional point:
\[
\HBM^\bullet\left(\cU_z; R\right) \simeq H^\bullet\left(\cU_z\cup\{\infty\},\{\infty\};R\right).
\]

\begin{definition}
\label{def:strat}
An $n$-dimensional {\bf (cohomological) stratification} of a locally compact Hausdorff topological space $\cX$ is an ascending sequence of closed subspaces
\[
\varnothing = \cX_{-1} \subset \cX_{0} \subset \cdots \subset \cX_{n-1} \subset \cX_n = \cX,
\] 
where the connected components of each difference $(\cX_d - \cX_{d-1})$, called $d$-dimensional {\bf strata}, are required to obey the following axioms.
\begin{itemize}  
\item {\bf Frontier:} if a stratum $\sigma$ intersects the closure of another stratum $\tau$, then in fact $\sigma$ is completely contained in the closure of $\tau$ and $\dim \tau \geq \dim \sigma$ (with equality occurring if and only if $\tau = \sigma$). This relation, denoted by $\tau \succeq \sigma$ henceforth, forms a partial order on the set of all strata.

\item {\bf Link:} for each $d$-stratum $\sigma$, there exists an $(n-d-1)$-dimensional stratified space $\cL = \cL(\sigma)$, called the {\em link} of $\sigma$: 
\[
\varnothing = \cL_{-1} \subset \cL_0 \subset \cdots \subset \cL_{n-d-1} = \cL,
\] so that every open neighborhood around a point $p$ in $\sigma$ admits a {\em basic} open sub-neighborhood $\cU_p \subset \cX$ with the following structure. The intersections $\cU_p \cap \cX_i$ are empty for $i < d$, while for $d \leq i \leq n$ there are stratified quasi-isomorphisms of compactly-supported singular cochain complexes (of $R$-modules):
\begin{align}
\label{eqn:unif}
\xymatrixcolsep{0.25in}
\xymatrix
{
\CBM^\bullet(\Cone\cL_{i-d-1} \times \R^d) \ar@{->}[r]^-{\simeq} & \CBM^\bullet(\cU_p \cap \cX_i), 
}
\end{align} 
where $\Cone\cL_\bullet$ denotes the open cone\footnote{The open cone on $\cZ$ is the quotient of $\cZ \times [0,1)$ by the relation which identifies $(z,0)$ with $(z',0)$ for any $z$ and $z'$ in $\cZ$; by convention, the open cone on the empty set is the one-point space.}  on $\cL_\bullet$. 
\end{itemize}
\end{definition}
By quasi-isomorphism here (and elsewhere) we mean a cochain map which induces isomorphisms on all cohomology groups. The additional requirement that the maps from (\ref{eqn:unif}) be stratified means precisely that they behave naturally in three separate ways: they should be {\em filtered}, {\em stratum-preserving} and {\em refinable} as explained below.
\begin{enumerate}
\item They should respect (contravariant maps induced by) the inclusions $\cL_\bullet \subset \cL_{\bullet+1}$ in their domains and $\cX_\bullet \subset \cX_{\bullet+1}$ their codomains so that the following squares commute:
\[
\xymatrixrowsep{0.2in}
\xymatrixcolsep{0.2in}
\xymatrix{
 \CBM^\bullet\left(\Cone\cL_{i-d-1} \times \R^d\right) \ar@{->}[r]^-{\simeq} &  \CBM^\bullet\left(\cU_p \cap \cX_{i}\right)  \\
  \CBM^\bullet\left(\Cone\cL_{i-d} \times \R^d\right)  \ar@{->}[u] \ar@{->}[r]_-{\simeq} & \CBM^\bullet\left(\cU_p \cap \cX_{i+1}\right) \ar@{->}[u].
}
\]
\item They should preserve strata in the sense that there exist surjective set-maps 
\[
\Phi_i: \set{(i-d-1)-\text{strata of } \cL} \surj \set{i-\text{strata } \tau \succeq \sigma \text{ of } \cX} \] so that for each $i$-stratum $\tau \succeq \sigma$, the cochain maps from (\ref{eqn:unif}) restrict to quasi-isomorphisms 
\[
\CBM^\bullet(\Cone \Phi^{-1}_i(\tau) \times \R^d) \stackrel{\simeq}{\longrightarrow} \CBM^\bullet(\cU_p \cap \tau).
\] 
\item They should refine to smaller basic neighborhoods $\cV_p$ so that the following triangle of cochain maps commutes:
\[
\xymatrixrowsep{0.2in}
\xymatrixcolsep{0.25in}
\xymatrix{
\CBM^\bullet(\Cone\cL_{i-d-1} \times \R^d) \ar@{->}[r]^-{\simeq} \ar@{->}[rd]_{\simeq} & \CBM^\bullet(\cU_p \cap \cX_i) \ar@{->}[d]^{}  \\
																																										& \CBM^\bullet(\cV_p \cap \cX_i) 
}
\]
Here the vertical map is induced by the inclusion $\cV_p \subset \cU_p$; and since the triangle commutes, this map is forced to also be a quasi-isomorphism. 
\end{enumerate}

The best way to acquire intuition for these constraints is to examine (pictures of) links associated to singular strata in low-dimensional spaces. 
\begin{example}
The decomposition of the space $\cY$ from the Introduction into the pinch point, the equatorial circle, a disk 2-stratum and a toral 2-stratum is a cohomological stratification. The pinch point lies at the frontier of the equatorial circle, which in turn lies at the frontier of both 2-strata. The link $\cL$ of the singular equator (which we will call $\sigma$ here) is a three-point space. An open cone on $\cL$ is therefore the union of three half-open intervals along a common boundary point. The product of this cone with the real line is homeomorphic to small neighborhoods around points in $\sigma$:

\begin{figure}[h!]
	\includegraphics[scale = 0.3]{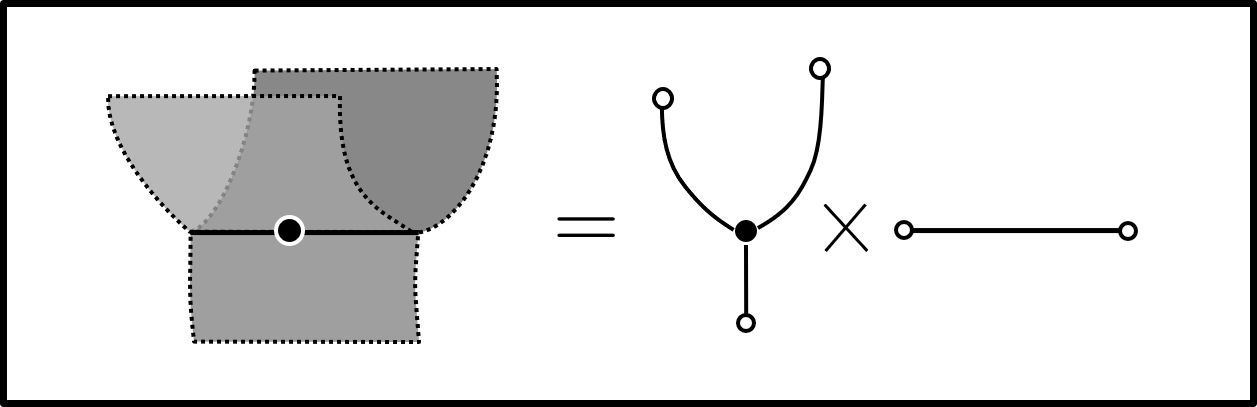}
\end{figure}

\noindent From such a homeomorphism, one obtains the desired cohomological equivalence (\ref{eqn:unif}). This equivalence is 
\begin{itemize}
	\item filtered, provided we stratify the open cone $\Cone\cL$ with one 0-stratum and three 1-strata (and the real line by a single 1-stratum), 
	\item stratum-preserving, provided that the map $\Phi_2$ sends two of the points in $\cL$ to the toral 2-stratum and the third point to the disk 2-stratum, and
	\item refinable, because one can shrink the neighborhood around $\sigma$ by a small amount while still preserving the homeomorphism. 
\end{itemize}
\end{example}

\begin{remark}
\label{ex:notcohstr}
To see a non-example of a cohomological stratification, try combining the pinch-point $p$ of $\cY$ with the singular equator $\sigma$ into a single 1-stratum. The refinability constraint will not be satisfied in this case. In particular, given any point $q \in \sigma$ near $p$ with two basic neighborhoods $V_q \subset U_q$ satisfying $p \in U_q - V_q$, the inclusion of $V_q$ into $U_q$ fails to induce an isomorphism on 2-dimensional compactly-supported cohomology. Instead, one obtains a rank one map $R^2 \to R^2$.
\end{remark}

The following observations concern technical aspects of Defintion \ref{def:strat}.
\begin{enumerate}
\item By a theorem of Eilenberg and Zilber \cite{eilenberg:zilber:53}, links are well-defined (only) up to filtered quasi-isomorphism type: we may replace $\cL$ by a filtered cochain complex by substituting a tensor product for the left side of (\ref{eqn:unif}).
\item The link axiom (when $i = d$) guarantees that each $d$-stratum is indeed a $d$-dimensional $R$-cohomology manifold.
\item This definition, unlike similar notions which typically appear in the intersection homology literature, does not require $\cX$ to be a {\bf pseudomanifold} (see \cite[Sec 1.1]{goresky:macpherson:83} for instance). In other words, $\cX_{n-1}$ need not equal $\cX_{n-2}$.
\end{enumerate}

 Every finite-dimensional regular\footnote{Regularity in this context means that the characteristic map of every cell is an embedding.} CW complex admits a cohomological stratification whose strata are the cells. 

\begin{definition}
\label{def:skelstrat}	
The {\bf skeletal stratification} of a finite-dimensional regular CW complex $\cX$ is defined as follows: writing the face partial order among cells as $\geq$, we recall that each cell $y$ of $\cX$ has
\begin{itemize}
\item an open star $\st(y)$ containing all cells $x$ which satisfy $x \geq y$, and 
\item a link $\lk(y)$, containing all cells $x$ that share a co-face but no face with $y$. 
\end{itemize}
The $d$-dimensional skeletal strata are the $d$-cells of $\cX$, and the frontier partial order coincides with $\geq$. Since $\st(y)$ of a $d$-cell $y$ is simultaneously homeomorphic to all sufficiently small neighborhoods around points in $y$ and to $\Cone\lk(y) \times \R^d$, the link $\cL(y)$ in the sense of Definition \ref{def:strat} is precisely $\lk(y)$. 
\end{definition}

We call one stratification a {\em coarsening} of another whenever each stratum of the former is a union of strata of the latter. All stratifications encountered henceforth will be coarsenings of the skeletal stratification for a fixed finite-dimensional regular CW complex $\cX$. 

\begin{definition}
\label{def:canstrat}
The {\bf canonical} stratification of a finite-dimensional regular CW complex $\cX$ is the coarsest stratification of $\cX$ whose strata are all unions of cells. 
\end{definition}

It may not be immediately clear why canonical stratifications should exist at all. We will establish both their existence and uniqueness for regular CW complexes in the sequel via an explicit construction, which can be made efficiently algorithmic whenever the number of cells in $\cX$ is finite. The core of our construction is heavily inspired by Goresky and MacPherson's proof that {\em canonical $\bar{p}$-filtrations} of stratified pseudomanifolds exist and are unique \cite[Sec 4.2]{goresky:macpherson:83}. 

\begin{remark}
\label{rem:gmcomp}
For readers familiar with the Goresky-MacPherson construction: there are two differences between their canonical $\bar{p}$-filtration proof and the argument that we will present in the sequel here. Namely,
\begin{enumerate}
	\item since our starting point is a regular CW complex rather than a stratified pseudomanifold, we have direct recourse to the combinatorics of cell incidences which determine the topology of the underlying space; and,
	\item here we use the sheaf of compactly-supported cellular cochains on the underlying space rather than the sheaf of intersection chains with respect to a perversity function $\bar{p}:\Z_{\geq 2} \to \Z$. 
\end{enumerate}
Aside from these modifications, our arguments below will follow theirs quite closely.
\end{remark}

The following result is a direct consequence of the frontier axiom from Definition \ref{def:strat} for cells lying in top strata.
\begin{proposition}
\label{prop:upclose}
Let $\cX$ be a regular CW complex of dimension $n$ equipped with any stratification coarser than its skeletal stratification. If a cell $y$ of $\cX$ lies in an $n$-dimensional stratum $\sigma$, then so must every cell $x$ which satisfies $x \geq y$. 
\end{proposition}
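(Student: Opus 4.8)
The plan is to reduce the statement to a single application of the frontier axiom from Definition \ref{def:strat}, exploiting the hypothesis that $\cX$ has dimension exactly $n$. Since the given stratification is a coarsening of the skeletal one, every cell of $\cX$ lies in a unique stratum; write $\sigma$ for the ($n$-dimensional) stratum containing $y$ and $\tau$ for the stratum containing the co-face $x$. It then suffices to prove that $\tau = \sigma$.

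First I would translate the face relation $x \geq y$ into an incidence of closures. In a regular CW complex the closed cell $\overline{x}$ is precisely the union of $x$ with all of its faces, so $x \geq y$ gives $y \subseteq \overline{x}$. Because $x$ is contained in $\tau$, passing to closures yields $\overline{x} \subseteq \overline{\tau}$, and hence $y \subseteq \overline{\tau}$. In particular the stratum $\sigma$ meets $\overline{\tau}$.

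Now the frontier axiom applies verbatim: from $\sigma \cap \overline{\tau} \neq \varnothing$ one concludes that $\sigma \subseteq \overline{\tau}$ and $\dim \tau \geq \dim \sigma$, with equality of dimensions forcing $\tau = \sigma$. But $\dim \sigma = n$ by hypothesis while $\dim \tau \leq \dim \cX = n$, so in fact $\dim \tau = n = \dim \sigma$; therefore $\tau = \sigma$, i.e. $x$ lies in $\sigma$, as claimed.

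I do not anticipate a genuine obstacle here; the only point requiring a little care is the direction in which the frontier axiom is invoked — it is the closure of the stratum containing the larger cell $x$ that the stratum of $y$ must be seen to meet, rather than the other way around — together with the observation that an $n$-dimensional stratum is necessarily maximal with respect to the frontier order $\succeq$, so the bare inequality $\dim \tau \geq \dim \sigma$ already pins $\tau$ down completely once we know $\dim \tau \leq n$.
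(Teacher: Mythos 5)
Your proof is correct and follows essentially the same route as the paper: both note that $x \geq y$ places $y$ in the closure of the stratum $\tau$ containing $x$, then invoke the frontier axiom together with the fact that no stratum can exceed dimension $n$ to force $\dim\tau = \dim\sigma = n$ and hence $\tau = \sigma$. Your version merely spells out the closure argument ($y \subseteq \overline{x} \subseteq \overline{\tau}$) in slightly more detail than the paper does.
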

\begin{proof}
Let $\tau$ be the unique coarse stratum containing $x$. Since $y$ lies in the boundary of $x$ by assumption, the closure of $\tau$ intersects $\sigma$ non-trivially at $y$. The desired conclusion now follows from the axiom of the frontier in Definition \ref{def:strat} and the fact that there are no strata of dimension exceeding $n$, since $\dim \tau \geq \dim \sigma = n$ forces $\tau = \sigma$.
\end{proof}

Thus, membership of cells in top-dimensional canonical strata is {\em upward-closed} with respect to the face partial order.

\subsection{Localizations of posets} \label{sec:localization}

The localization of a poset $\bP$ about a subcollection $\Sigma$ of its order relations is the minimal category containing $\bP$ in which the relations of $\Sigma$ have been rendered invertible \cite[Ch I.1]{gabriel:zisman:67}. 
\begin{definition} 
\label{def:locpos}
Let $(\bP,\geq)$ be a poset and $\Sigma = \{(x_\bullet \geq y_\bullet)\}$ a subset of its relations which is closed\footnote{That is, if $(p \geq q)$ and $(q \geq r)$ both lie in $\Sigma$, then so does $(p \geq r)$.} under composition. The {\bf localization} of $\bP$ about $\Sigma$ is a category $\bP[\Sigma^{-1}]$ whose objects are precisely the elements of $\bP$, while morphisms from $p$ to $q$ are given by equivalence classes of finite (but arbitrarily long) $\Sigma$-{\em zigzags} of order relations in $P$
\[
p \geq y_0 \leq x_0 \geq \cdots \geq y_k \leq x_k \geq q, 
\]
where:
\begin{enumerate}
\item only relations in $\Sigma$ and equalities can point backwards (i.e., $\leq$),
\item composition is given by concatenation, and
\item the trivial zigzag $p = p$ represents the identity of each element $p$.
\end{enumerate}
The equivalence between $\Sigma$-zigzags is generated by the transitive closure of the following basic relations. Two such zigzags are related
\begin{itemize}
\item {\em horizontally} if one is obtained from the other by removal of intermediate equalities:
\begin{align*}
\left(\cdots  \leq x \geq y = y \geq x' \leq \cdots \right) &\sim \left(\cdots  \leq x \geq x' \leq \cdots\right), \\ 
\left(\cdots  \geq y \leq x = x \leq y' \geq \cdots \right) &\sim \left(\cdots  \geq y \leq y' \geq \cdots\right),  
\end{align*}
\item or {\em vertically}, if they form the rows in a grid:
\begin{alignat*}{11}
p~ & ~\geq~ &   ~y_0~  &  ~\leq~ &    ~x_0~   &   ~\geq~ & ~\cdots~ & ~\leq~ &    ~x_k~   &  ~\geq~ & ~q \\
\roteq~  &              & \geqdn~  &              &  \geqdn~ &                &                &             & \geqdn~    &               &   \roteq    \\
p~ & ~\geq~ &   ~y'_0~  &  ~\leq~ &     ~x'_0~  &  ~\geq~  &  ~\cdots~ & ~\leq~ &     ~x'_k~ &   ~\geq~ & ~q
\end{alignat*}
where all vertical face relations (also) lie in $\Sigma$.
\end{itemize} 
\end{definition}

The category $\bP[\Sigma^{-1}]$ enjoys the following universal property \cite[Lem I.1.3]{gabriel:zisman:67} --- there is a canonical {\bf localization functor} $\bP \to \bP[\Sigma^{-1}]$ which acts by inclusion. In particular, it sends each element of $\bP$ to itself and each relation $x \geq y$ to its own equivalence class of $\Sigma$-zigzags. Moreover, any functor $\bP \to \bC$ which maps order relations in $\Sigma$ to isomorphisms in the target category $\bC$ factorizes uniquely across the localization functor.

\subsection{Cellular cosheaves}
 
Sheaves (and their dual cosheaves) assign algebraic objects to open sets of topological spaces \cite{bredon:97}. Our interest here is in a particularly tame and computable sub-class of cosheaves, where the base space is always a finite-dimensional regular CW complex $\cX$ and the assigned algebraic objects are cell-wise constant. We write $\Face\op(\cX)$ to indicate the poset of cells with order relation $x > y$ denoting that $y$ is a face of $x$. 

\begin{definition}
\label{def:cellcosh}
A {\bf cellular cosheaf} over $\cX$ taking values in an Abelian category $\bA$ is a functor $\bF: \Face\op(\cX) \to \bA$. Thus, it assigns to each cell $x$ an $\bA$-object $\bF(x)$ and to each face relation $x \geq y$ an $\bA$-morphism $\bF(x \geq y): \bF(x) \to \bF(y)$ so that
\begin{itemize}
\item $\bF(x = x)$ is the identity on $\bF(x)$ for each cell $x$, and
\item $\bF(y \geq z) \circ \bF(x \geq y)$ equals $\bF(x \geq z)$ across any triple of cells $x \geq y \geq z$.
\end{itemize}
We call $\bF(x)$ the {\bf stalk} of $\bF$ at $x$, and call $\bF(x \geq y)$ the {\bf extension map} of $\bF$ at $x \geq y$.
\end{definition}

A {\bf morphism} of cellular cosheaves $\eta:\bF \to \bG$ over $\cX$ is a natural transformation; it assigns to each cell $x$ an $\bA$-morphism $\eta_x: \bF(x) \to \bG(x)$ so that for each $x \geq y$ the relevant square in $\bA$ commutes:
\[
\xymatrixrowsep{0.3in}
\xymatrixcolsep{0.42in}
\xymatrix{
\bF(x) \ar@{->}[r]^-{\bF(x \geq y)} \ar@{->}[d]_{\eta_x} & \bF(y) \ar@{->}[d]^{\eta_y} \\
\bG(x) \ar@{->}[r]_-{\bG(x \geq y)} & \bG(y)
}
\]
Cosheaf morphisms may be composed stalkwise, and there is always a zero morphism $0: \bF \to \bG$ which (as one might expect) assigns the zero map $0_x:\bF(x) \to \bG(x)$ in $\bA$ to each cell $x$ of $\cX$.
\begin{definition}
\label{def:compcosh}
A (lower-bounded) {\bf complex} of cellular cosheaves $\bF^\bullet$ (over $\cX$, taking values in $\bA$) is a sequence of $\bA$-valued cellular cosheaves over $\cX$ connected by cosheaf morphisms: 
\[
\xymatrixcolsep{0.3in}
\xymatrix{
\bF^0 \ar@{->}[r]^{\eta^0} & \bF^1 \ar@{->}[r]^{\eta^1} & \bF^2 \ar@{->}[r]^{\eta^2} & \cdots
}
\]
so that every successive composition $\eta^{\bullet+1}\circ\eta^\bullet$ yields the zero morphism.
\end{definition}
Every complex of cosheaves $\bF^\bullet$ on $\cX$ may be reinterpreted as a single cosheaf which takes values in the category $\Ch(\bA)$ of lower-bounded cochain complexes in $\bA$ --- consider the collection of commuting squares in $\bA$ that are assigned to face relations $x \geq y \geq z \geq \cdots$ of $\cX$:
\[
\xymatrixcolsep{0.35in}
\xymatrixrowsep{0.2in}
\xymatrix{
\bF^0(x) \ar@{->}[r] \ar@{->}[d] & \bF^1(x) \ar@{->}[r] \ar@{->}[d] & \bF^2(x) \ar@{->}[r] \ar@{->}[d] & \cdots \\
\bF^0(y) \ar@{->}[r] \ar@{->}[d] & \bF^1(y) \ar@{->}[r] \ar@{->}[d] & \bF^2(y) \ar@{->}[r] \ar@{->}[d] & \cdots \\
\bF^0(z) \ar@{->}[r]  \ar@{->}[d] & \bF^1(z) \ar@{->}[r] \ar@{->}[d] & \bF^2(z) \ar@{->}[r]  \ar@{->}[d] & \cdots \\
\vdots & \vdots & \vdots & \ddots
}
\]
The horizontal slices of this diagram confirm that a cochain complex in $\bA$ is allocated to each cell, while vertical arrows between these slices prescribe the desired extension maps. Taking cohomology horizontally, one therefore obtains an ordinary cellular cosheaf of (graded) $\bA$-objects over $\cX$. We call this the {\bf cohomology} of $\bF^\bullet$ and denote it by $H^\bullet\bF$.

\section{Local cohomology of CW complexes} \label{sec:lochom}

Let $\cX$ be a finite-dimensional regular CW complex with face poset $\Face\op(\cX)$ and $R$ a fixed nontrivial commutative ring with identity $1_R$. We write $\Mod(R)$ to denote the category of $R$-modules, and $\Ch(R)$ -- rather than the cumbersome $\Ch(\Mod(R))$ -- to indicate the category of cochain complexes of $R$-modules indexed by the non-negative natural numbers.
\begin{definition}{\cite[Def 7.1]{curry:16}}
\label{def:loccoh}
The {\bf local cohomology} $\bL^\bullet$ of $\cX$ is a complex of cosheaves
\[
\xymatrixcolsep{0.33in}
\xymatrix{
\bL^0 \ar@{->}[r]^{\beta^0} & \bL^1 \ar@{->}[r]^{\beta^1} & \bL^2 \ar@{->}[r]^{\beta^2} & \cdots
}
\]
over $\cX$ taking values in $\Mod(R)$, prescribed by the following data. 
\begin{enumerate} 
\item For each dimension $d \geq 0$ and cell $x$ of $\cX$, the cosheaf $\bL^d$ has as its stalk $\bL^d(x)$ the free $R$-module with basis
\[
\set{z \in \Face\op(\cX) \mid z \geq x \text{ and } \dim z = d}.
\] When $x \geq y$, the extension $\bL^d(x \geq y):\bL^d(x) \inj \bL^d(y)$ is determined by the obvious inclusion of basis cells. 
\item The cosheaf morphism $\beta^d$ assigns to each cell $x$ the map $\beta^d_x:\bL^d(x) \to \bL^{d+1}(x)$ defined by (linearly extending) the following action on basis cells. For each $d$-cell $z \geq x$, we have 
\[
\beta^d_x(z) = \sum_{w} \ip{w,z}_R \cdot w,
\]
where the sum is taken over all $(d+1)$-cells $w \geq x$, and $\ip{w,z}_R$ is the $R$-valued degree of the attaching map in $\cX$ from the boundary of $w$ onto $z$. (Since we have assumed that $\cX$ is regular, this number $\ip{w,z}_R$ takes values in $\{0,\pm 1_R\}$ for all cells $w$ and $z$.)
\end{enumerate}
\end{definition}

Routine calculations confirm that $\beta^{d+1} \circ \beta^d$ is always zero, and that the $\beta^\bullet_x$'s commute with all relevant extension maps. In light of the discussion following Defintion \ref{def:compcosh}, we will shift perspective and regard $\bL^\bullet$ as a single cosheaf valued in $\Ch(R)$. In this setting, the stalk $\bL^\bullet(x)$ lying over each cell $x$ is the entire cochain complex
\[
\xymatrixcolsep{0.33in}
\xymatrix{
\bL^0(x) \ar@{->}[r]^{\beta^0_x}  & \bL^1(x) \ar@{->}[r]^{\beta^1_x}  & \bL^2(x) \ar@{->}[r]^{\beta^2_x} & \cdots,
}
\] 
and $H^\bullet \bL(x)$ is the compactly-supported cohomology of $x$'s open star in $\cX$. By excision, one may describe $H^\bullet\bL(x)$ as the ordinary relative cohomology of the pair $\left(\overline{\st}(x),\partial\overline{\st}(x)\right)$ where $\overline{\st}(x)$ is the closure of $x$'s open star in $\cX$ and $\partial$ denotes the topological boundary:
\[
\partial\overline{\st}(x) = \overline{\st}(x)-\st(x).
\]
When $y$ is a zero-dimensional cell, we have $\partial\overline{\st}(y) = \lk(y)$.

\begin{definition}
	For each $d \geq 0$, we write $R[d]^\bullet$ to indicate the special cochain complex in $\Ch(R)$ which is trivial except for a single copy of $R$ in the $d$-th position:
\[
0 \to \cdots \to 0 \to R \to 0 \to \cdots
\] 
Thus, $H^\bullet R[d]$ is the cohomology of small neighborhoods in $d$-dimensional $R$-cohomology manifolds, as in (\ref{eqn:cohmfd}). 
\end{definition}

By Definition \ref{def:strat}, if a cell $x$ lies in some top-dimensional stratum of $\cX$, then we must have an isomorphism $H^\bullet\bL(x) \simeq H^\bullet R[\dim \cX]$. For cells of high dimension, this requirement has strong consequences.

\begin{proposition}
\label{prop:topcell}
If $n = \dim \cX$, then every $n$-cell $x$ has $\bL^\bullet(x) = R[n]^\bullet$. And moreover, every $(n-1)$-cell $y$ with $H^\bullet\bL(y) \simeq H^\bullet R[n]$ lies in the boundary of exactly two $n$-cells.
\end{proposition}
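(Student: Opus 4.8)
The plan is to treat the two assertions separately, since the first is a direct unwinding of Definition~\ref{def:loccoh} while the second needs one geometric input followed by linear algebra over $R$. For the first assertion: when $\dim x = n = \dim\cX$, the only co-face of $x$ of dimension $n$ is $x$ itself, and $\cX$ has no cells of dimension exceeding $n$; hence $\bL^d(x)$ vanishes for every $d\neq n$ and equals the rank-one free module $R\cdot x$ for $d=n$, so $\bL^\bullet(x)=R[n]^\bullet$ on the nose.

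For the second assertion, I would fix an $(n-1)$-cell $y$ and enumerate as $w_1,\dots,w_k$ the $n$-cells that contain $y$ in their boundary. Reading off Definition~\ref{def:loccoh}, the complex $\bL^\bullet(y)$ is concentrated in cohomological degrees $n-1$ and $n$: its degree-$(n-1)$ term is $R\cdot y\simeq R$, its degree-$n$ term is $\bigoplus_{i=1}^{k}R\cdot w_i\simeq R^{k}$, and the only nonzero differential is the map $\beta^{n-1}_y\colon R\to R^{k}$ sending the generator $y$ to the tuple $(\ip{w_1,y}_R,\dots,\ip{w_k,y}_R)$. The geometric heart of the argument is the claim that each incidence number $\ip{w_i,y}_R$ is in fact a unit, equal to $\pm 1_R$: since $\cX$ is regular and $y$ is a codimension-one face of $w_i$, the attaching map of $w_i$ restricts to a local homeomorphism near a point of $y$, so the local degree defining $\ip{w_i,y}_R$ is $\pm 1$. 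This is the one step I expect to require genuine care, although it is a standard feature of regular CW complexes; everything afterward is bookkeeping.

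Granting that every $\ip{w_i,y}_R=\pm 1_R$, I would finish by computing $H^\bullet\bL(y)$ directly. If $k=0$ then $\beta^{n-1}_y$ is the zero map $R\to 0$, so $H^{n-1}\bL(y)\simeq R\neq 0$, contradicting the hypothesis $H^\bullet\bL(y)\simeq H^\bullet R[n]$. If $k\geq 1$, the image of $\beta^{n-1}_y$ is spanned by a vector with a unit coordinate, so $\beta^{n-1}_y$ is a split monomorphism; hence $H^{n-1}\bL(y)=\ker\beta^{n-1}_y=0$, and completing that vector to a basis of $R^{k}$ identifies $H^{n}\bL(y)=\coker\beta^{n-1}_y$ with the free module $R^{k-1}$. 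The hypothesis then forces $R^{k-1}\simeq R$ as $R$-modules, and since a nonzero commutative ring has invariant basis number, $k-1=1$. Therefore $y$ lies in the boundary of exactly two $n$-cells.
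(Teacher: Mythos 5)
Your proposal is correct and follows essentially the same route as the paper's own proof: unwind Definition~\ref{def:loccoh} for the $n$-cells, then identify $\bL^\bullet(y)$ with $R \to R^k$ having unit ($\pm 1_R$) coefficients by regularity, and conclude $k=2$ from the invariant basis property of the commutative ring $R$. Your write-up merely makes explicit a couple of steps the paper leaves implicit (the exclusion of $k=0$ and the identification of the cokernel with $R^{k-1}$ via a split monomorphism), which is a welcome but not substantively different elaboration.
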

\begin{proof}
The first assertion follows from Definition \ref{def:loccoh} and the fact that there are no cells of dimension $\geq (n+1)$ in $\cX$. Turning to the second assumption, note that the local cohomology of an $(n-1)$-cell $y$ is only supported in dimensions $(n-1)$ and $n$:
\[
\bL^\bullet(y) = 0 \to \cdots \to 0 \to R \stackrel{\beta^{n-1}_y}{\longrightarrow} R^k \to 0 \to \cdots
\]
where $k$ counts the number of $n$-cells $x$ satisfying $x > y$. If $\bL^\bullet(y)$ is quasi-isomorphic to $R[n]^\bullet$, then $\beta^{n-1}_y$ must be injective (since $H^{n-1}\bL(y)$ is trivial), so $k > 0$. By definition, $\beta^{n-1}_y$ acts by sending a basis element of $R$ to a linear combination of basis elements in $R^k$; and by the regularity of $\cX$, all coefficients in this linear combination lie in $\{\pm 1_R\}$. The $n$-th cohomology of $\bL^\bullet(y)$ is the quotient
\[
H^n\bL(y) = \frac{R^k}{\text{im}(\beta^{n-1}_y)},
\] and it must be isomorphic to $R = H^n R[n]$ by assumption. Since $R$ is a commutative ring, it satisfies the invariant basis property --- that is to say, $R^k$ and $R^\ell$ are isomorphic as $R$-modules if and only if $k = \ell$ \cite[Def 1.1]{weibel:13}. Thus, we have $k = 2$ as desired.
\end{proof}

In light of the preceding result and (\ref{eqn:cohmfd}), one might hope to identify top-dimensional canonical strata by simply clustering together adjacent cells $x \geq y$ in $\cX$ whenever there exists a top-dimensional cell $w$ satisfying $w \geq x$ so that all extension maps below are quasi-isomorphisms:
\[
R[\dim \cX]^\bullet = \bL^\bullet(w) \stackrel{\simeq}{\inc} \bL^\bullet(x) \stackrel{\simeq}{\inc} \bL^\bullet(y).
\] 
Our next example shows that this (necessary) condition for $x$ and $y$ to belong to the same top stratum is not sufficient when $\dim w - \dim y > 1$.

\begin{example}
\label{ex:upclose}
Consider the $2$-dimensional simplicial complex depicted below: it is obtained by subdividing a parallelogram into four $2$-simplices along diagonals, and attaching an extra $2$-simplex along one of the four resulting half-diagonals.

\begin{figure}[h!]
\includegraphics[scale = 0.3]{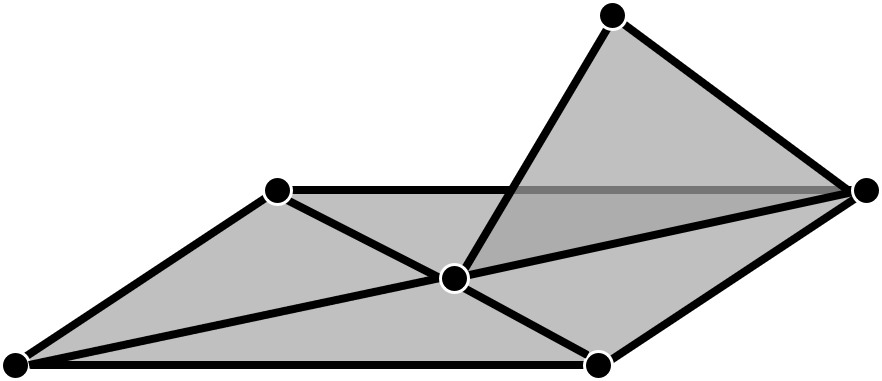}
\end{figure}

Let $w, x$ and $y$ respectively denote the extra $2$-simplex, the shared half-diagonal, and the central vertex. It is easy to confirm that $\bL^\bullet(y)$ is quasi-isomorphic to $R[2]^\bullet$ since its link has the cohomology of a circle. And moreover, given any sequence $w' > x' > y$ of cells where $w'$ and $x'$ differ from $w$ and $x$ respectively, the inclusion maps 
\[
R[2]^\bullet = \bL^\bullet(w') \inc \bL^\bullet(x') \inc \bL^\bullet(y)
\] are all quasi-isomorphisms. Note, however, that $\bL^\bullet(x)$ is not quasi-isomorphic to $R[2]^\bullet$ by Proposition \ref{prop:topcell} since there are three $2$-simplices containing $x$ as a face. Thus, $x$ does not belong to any canonical $2$-stratum, and hence (by Proposition \ref{prop:upclose}), neither does $y$.
\end{example}

\section{Extracting top strata} \label{sec:top}

Throughout this section, $\cX$ denotes an $n$-dimensional regular CW complex, $R$ is a fixed nontrivial commutative ring with identity, and $\bL^\bullet:\Face\op(\cX) \to \Ch(R)$ is the local cohomology complex of $\cX$ from Definition \ref{def:loccoh}. Consider the collection of face relations $(x \geq y)$ in $\Face\op(\cX)$ sent by $\bL^\bullet$ to quasi-isomorphisms, i.e.,
\begin{align}
\label{eqn:E0}
E_0 = \set{(x \geq y) \text{ in } \Face\op(\cX) \mid H^\bullet\bL(x \geq y) \text{ is an isomorphism}}
\end{align}
and define the subcollection
\begin{align}
\label{eqn:W0}
W_0 = \big\{(x \geq y) \text{ in } E_0 \mid & ~ H^\bullet\bL(y) \simeq H^\bullet R[n], \text{ and } \nonumber \\
																																	& (x' \geq y) \in E_0 \text{ for all } x' \geq y \text{ in }\Face\op(\cX) \big\}.
\end{align}
In other words, $W_0$ is what remains of $E_0$ when we impose 
\begin{itemize}
\item {\bf dimensionality}: retain only those face relations in which both cells have the local cohomology (isomorphic to that) of a top-dimensional cell, and 
\item {\bf upward-closure}: remove $(x \geq y)$ if there exists some $x' \geq y$ for which $\bL^\bullet(x' \geq y)$ is not a quasi-isomorphism (recall that cells lying in top strata are upward-closed by Proposition \ref{prop:upclose}).
\end{itemize}

It is clear that if $W_0$ contains both $(x \geq y)$ and $(y \geq z)$, then it also contains $(x \geq z)$.
\begin{definition}
\label{def:zerloc}
The category $\bS_0$ is the localization of the face poset $\Face\op(\cX)$ about $W_0$.
\end{definition}
Recall that $\bS_0$ has the cells of $\cX$ as objects, while its morphisms are equivalence classes of $W_0$-zigzags (as described in Definition \ref{def:locpos}). And, there is a canonical functor $\Face\op(\cX) \to \bS_0$ which is universal with respect to rendering all the face relations from $W_0$ invertible.

\begin{proposition}
\label{prop:lociso}
Two cells $w \neq z$ are isomorphic in $\bS_0$ if and only if there is a $W_0$-zigzag
\[
w \geq y_0 \leq x_0 \geq \cdots \geq y_k \leq x_k \geq z
\]
whose last relation $(x_k \geq z)$ lies in $W_0$. 
\end{proposition}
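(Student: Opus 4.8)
The plan is to first translate the definition of $W_0$ in \eqref{eqn:W0} into a condition on single cells, and then read the structure of $\bS_0$ off from it. Call a cell $y$ \emph{good} if $H^\bullet\bL(y)\simeq H^\bullet R[n]$ and $H^\bullet\bL(x'\geq y)$ is an isomorphism for every co-face $x'\geq y$; inspecting \eqref{eqn:W0} shows that $(x\geq y)\in W_0$ if and only if $y$ is good, with no further constraint on $x$. Using that $H^\bullet\bL$ is a functor on $\Face\op(\cX)$ and that isomorphisms satisfy two-out-of-three, I would check that every co-face of a good cell is good: if $y$ is good and $x\geq y$ then $H^\bullet\bL(x\geq y)$ is already an isomorphism, so $H^\bullet\bL(x)\simeq H^\bullet R[n]$, and for $x'\geq x$ the relation $H^\bullet\bL(x'\geq y)=H^\bullet\bL(x\geq y)\circ H^\bullet\bL(x'\geq x)$ forces $H^\bullet\bL(x'\geq x)$ to be an isomorphism as well. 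Writing $G$ for the set of good cells, it follows that $G$ is upward-closed in the face order, that $W_0$ is exactly the set of all face relations between elements of $G$, and that $(z\geq z)\in W_0$ for every $z\in G$.

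The ``if'' direction then follows quickly. In a $W_0$-zigzag $w\geq y_0\leq x_0\geq\cdots\geq y_k\leq x_k\geq z$ with $(x_k\geq z)\in W_0$, first discard intermediate equalities (a horizontal move) so that every backward step is a genuine element of $W_0$; each such step has its lower cell among the $y_i$, so every $y_i$ lies in $G$, hence so does every $x_i$ and $w$ itself, and $z\in G$ because $(x_k\geq z)\in W_0$. Every cell of the reduced zigzag thus lies in $G$, so every relation in it lies in $W_0$, so the localization functor $Q\colon\Face\op(\cX)\to\bS_0$ sends each of them to an isomorphism; the morphism $w\to z$ represented by the zigzag is therefore a composite of isomorphisms, and $w\cong z$.

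For the converse I would isolate the following structural fact: if $w$ is \emph{not} good, then every morphism of $\bS_0$ with source $w$ is the image $Q(w\geq v)$ of a single face relation. Indeed, a representing zigzag out of $w$ that uses a backward step would, after composing consecutive forward steps, begin $w\geq y_0\leq x_0\geq\cdots$ with $(x_0\geq y_0)\in W_0$, whence $y_0\in G$ and then $w\geq y_0$ would put $w\in G$. Now suppose $w\cong z$ with $w\neq z$, and assume, without loss of generality by symmetry, that $w$ is not good. An isomorphism $f\colon w\to z$ would then be $Q(w\geq z)$, forcing $w>z$ strictly; its inverse $f^{-1}\colon z\to w$ is a morphism out of $z$, which — if $z$ were also not good — would equal $Q(z\geq w)$, giving $z\geq w$ and contradicting $w>z$. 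Hence $z$ is good; but then $(w\geq z)\in W_0$ and the upward-closure of $G$ makes $w$ good, a contradiction. So $w$ and $z$ are both good, and taking any morphism $w\to z$ in its canonical zigzag form $w\geq y_0\leq x_0\geq\cdots\geq y_k\leq x_k\geq z$, its last relation $(x_k\geq z)$ is a face relation whose lower cell $z$ is good, hence $(x_k\geq z)\in W_0$ — exactly the required zigzag.

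I expect the crux to be the structural fact in the third paragraph: one must argue, strictly within the zigzag calculus of Definition \ref{def:locpos}, that no morphism out of a non-good cell can involve a backward relation, and then combine $f$ with $f^{-1}$ and the upward-closure of $G$ to rule out $w\notin G$. The remaining steps are bookkeeping with the explicit description of $W_0$ and two-out-of-three.
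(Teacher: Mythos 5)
Your argument is correct, and its ``only if'' half takes a genuinely different route from the paper's. Both proofs begin from the same observation, which you make explicit via the notion of a \emph{good} cell: membership of $(x \geq y)$ in $W_0$ from (\ref{eqn:W0}) depends only on the lower cell $y$, and goodness is upward-closed by functoriality of $H^\bullet\bL$ together with two-out-of-three. For the ``if'' direction you and the paper do essentially the same thing --- the paper writes down the reversed zigzag explicitly, you observe that every relation in the reduced zigzag lies in $W_0$, so the represented morphism is a composite of isomorphisms; the only wrinkle is the degenerate case where all backward arrows are equalities and there are no $y_i$ from which to propagate goodness to $w$, but this is harmless since the reduced zigzag is then the single relation $w \geq z$ with $z$ good, hence in $W_0$. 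The real divergence is in the converse. The paper composes the two zigzags exhibiting the isomorphism and invokes the claim that any zigzag representing an identity has all its forward relations in $W_0$ --- a statement about the equivalence relation of Definition \ref{def:locpos}, checked by inspecting the horizontal and vertical moves. You avoid analysing the equivalence relation altogether: your key lemma is that every morphism out of a non-good cell is the image under the localization functor $\Face\op(\cX) \to \bS_0$ of a single face relation (the first genuine backward arrow of any representative would force the source to be good), and then the isomorphism, its inverse, antisymmetry of the face poset, and upward-closure force both $w$ and $z$ to be good, after which the last relation of any representative zigzag is automatically in $W_0$. Your route buys self-containedness --- it needs only that morphisms admit zigzag representatives and that localization respects composition, with no control over which zigzags are equivalent --- whereas the paper's identity-zigzag lemma extracts finer information about $\bS_0$ at the cost of the (somewhat delicate) inspection of the generating moves. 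Both arguments are sound.
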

\begin{proof}
Given any $W_0$-zigzag from $w$ to $z$ such as the one depicted above, recall that every $(x_\bullet \geq y_\bullet)$ is in $W_0$. And by the upward-closure requirement, each preceding $(x_{\bullet-1} \geq y_\bullet)$ is automatically in $W_0$ (with the understanding that $x_{-1} = w$). Since the last relation $(x_k \geq z)$ is assumed to lie in $W_0$ as well, one can directly construct an inverse zigzag in $\bS_0$:
\[
z = z \leq x_k \geq y_k \leq \cdots \leq x_0 \geq y_0 \leq w = w.
\]
Conversely, if $w$ and $z$ are isomorphic in $\bS_0$, then there exist $W_0$-zigzags from $w$ to $z$ and vice-versa whose composites are (equivalent to) identity morphisms. We label some relevant cells in these zigzags:
\[
w \geq \cdots \leq x \geq z \quad \text{ and } \quad z \geq y \leq \cdots \geq w.
\]A brief examination of the horizontal and vertical relations from Definition \ref{def:locpos} confirms that any zigzag in the equivalence class of the identity must have all its forward-pointing $\geq$'s lying in $W_0$. As a consequence, when we compose the zigzags labeled above, we obtain $(x \geq y) \in W_0$. An appeal to the upward-closure requirement now forces $(x \geq z) \in W_0$, as desired. 
\end{proof}

Here is the main result of this section.
\begin{proposition}
\label{prop:topstrat}
Two $n$-dimensional cells lie in the same canonical $n$-stratum of $\cX$ if and only if they are isomorphic in $\bS_0$.
\end{proposition}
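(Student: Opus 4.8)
The plan is to prove the two implications separately, using Proposition \ref{prop:lociso} to translate ``isomorphic in $\bS_0$'' into the existence of a suitable $W_0$-zigzag, and using Propositions \ref{prop:upclose} and \ref{prop:topcell} to pin down which cells can possibly lie in a top stratum. The key preliminary observation — which I would isolate as a first step — is that a cell $x$ lies in \emph{some} canonical $n$-stratum if and only if $H^\bullet\bL(x)\simeq H^\bullet R[n]$ \emph{and} $(x'\geq x)\in E_0$ for every $x'\geq x$; equivalently, iff $x$ lies in the ``$W_0$-saturated'' region of $\Face\op(\cX)$. The forward direction here is immediate from the link axiom (Definition \ref{def:strat}) together with Proposition \ref{prop:upclose}; the reverse direction is where the real content lies, and it amounts to showing that the putative top-dimensional region, cut out purely by the local-cohomology conditions defining $W_0$, actually assembles into a valid cohomological stratification — so this is the step I expect to be the main obstacle. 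I would handle it by declaring the candidate top stratum(a) to be the connected components (in the quotient-topology sense, i.e.\ unions of cells closed under the equivalence generated by $W_0$-relations) of the open subcomplex swept out by such cells, and then verifying the frontier and link axioms: frontier follows from upward-closure, and the link axiom follows because for any cell $x$ in this region the inclusions $\bL^\bullet(x'\geq x)$ being quasi-isomorphisms forces the relevant cones $\Cone\lk(x)$ to have the cohomology of $R[n-\dim x]$-type neighborhoods, which is exactly the cohomological uniformity (\ref{eqn:unif}) demanded at $i=n$ (and trivial for $i<n$, since there are no strata below in this region). One also needs that this is the \emph{coarsest} such stratification restricted to the top, but any coarsening has top strata that are unions of cells, each of which satisfies the necessary conditions by Proposition \ref{prop:upclose} and the link axiom, so our region is the minimal one it can be refined from — hence these really are the canonical $n$-strata.

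For the forward implication of the proposition, suppose $n$-cells $w$ and $z$ lie in the same canonical $n$-stratum $\sigma$. By Definition \ref{def:canstrat} and the preliminary step, $\sigma$ is a connected union of cells all satisfying the $W_0$-membership conditions at their faces; connectedness of $\sigma$ as a subcomplex lets me find a chain of cells in $\sigma$ linking $w$ to $z$ through successive face/co-face incidences, and each such incidence $(x\geq y)$ with both cells in $\sigma$ lies in $W_0$ (the dimensionality condition holds since $H^\bullet\bL$ is constant $\simeq H^\bullet R[n]$ on $\sigma$, and upward-closure holds by Proposition \ref{prop:upclose} applied within $\sigma$). Stringing these incidences into a zigzag
\[
w \geq y_0 \leq x_0 \geq \cdots \geq y_k \leq x_k \geq z
\]
with every forward relation — in particular the last one $(x_k\geq z)$ — in $W_0$, Proposition \ref{prop:lociso} yields that $w$ and $z$ are isomorphic in $\bS_0$. (A small bookkeeping point: a chain of incidences between top cells naturally produces segments $x\geq y\leq x'$ with $x,x'$ $n$-cells and $y$ a common face; these are exactly $W_0$-zigzags of the required shape, and the very last co-face relation into $z$ is of the form $(x_k\geq z)\in W_0$.)

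For the converse, suppose $w\neq z$ are $n$-cells isomorphic in $\bS_0$. By Proposition \ref{prop:lociso} there is a $W_0$-zigzag from $w$ to $z$ whose final relation lies in $W_0$; by the upward-closure clause in the definition of $W_0$, \emph{every} forward relation in this zigzag then lies in $W_0$ (each $(x_{\bullet-1}\geq y_\bullet)$ is forced in, as in the proof of Proposition \ref{prop:lociso}). Every cell appearing in the zigzag therefore satisfies the $W_0$-membership conditions, hence — by the preliminary step — lies in some canonical $n$-stratum; and consecutive cells sharing a $W_0$-relation lie in the closure of one another, so by the frontier axiom they lie in the \emph{same} stratum. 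Propagating this equality of strata along the zigzag gives that $w$ and $z$ lie in a common canonical $n$-stratum, completing the proof. The only genuine subtlety, as flagged above, is the existence claim packaged into the preliminary step — that the cohomological conditions carving out $W_0$ really do produce honest strata satisfying Definition \ref{def:strat} — and I would spend the bulk of the write-up there, citing Proposition \ref{prop:topcell} for the $(n-1)$-cell analysis and the excision description of $H^\bullet\bL(x)$ as $H^\bullet(\overline{\st}(x),\partial\overline{\st}(x))$ to control the cones $\Cone\lk(x)$.
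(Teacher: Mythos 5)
Your proposal is correct and follows essentially the same route as the paper: the forward direction uses connectedness of the stratum to produce a $W_0$-zigzag and then Proposition \ref{prop:lociso}, while the reverse direction verifies that the region carved out by the $W_0$ conditions (dimensionality plus upward-closure) satisfies the frontier and link requirements of Definition \ref{def:strat} and hence sits inside canonical $n$-strata. The only cosmetic difference is that the paper runs this verification on the connected union of open stars along the given zigzag and concludes directly that it lies in a single stratum, whereas you run it on the whole $W_0$-saturated region and then propagate stratum-equality along the zigzag via the frontier axiom --- the same ideas, packaged slightly differently and at a comparable level of detail to the paper's own argument.
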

\begin{proof}
To show that the first assertion implies the second, let $\sigma$ be the $n$-stratum containing two distinct cells $w$ and $z$. Since $\sigma$ is connected by Definition \ref{def:strat}, there exists a zigzag of cells lying entirely in $\sigma$ from $w$ to $z$, say
\[
w \geq v_0 \leq v_1 \geq \cdots \geq v_k \leq z.
\]
By the same definition and Proposition \ref{prop:upclose}, it follows that every face relation appearing above lies in $W_0$. By adding an identity to the end of this zigzag, we may therefore construct a morphism in $\bS_0$ from $w$ to $z$,
\[
w \geq v_0 \leq v_1 \geq \cdots \geq v_k \leq z = z.
\]
It is now evident from Proposition \ref{prop:lociso} that $w$ and $z$ are isomorphic in $\bS_0$. In order to show the reverse implication, assume the existence of an isomorphism from $w$ to $z$ in $\bS_0$, say
\[
w \geq y_0 \leq x_0 \geq \cdots \geq y_k \leq x_k = z,
\]
where the last equality follows from the fact that $\cX$ contains no cells of dimension $> n$. Define the union 
\[
\cZ = \bigcup_{i=0}^k \textbf{st}(y_i)
\] of open stars, and note that $\cZ$ is connected --- every constituent $\textbf{st}(y_i)$ is connected and intersects the subsequent $\textbf{st}(y_{i+1})$ non-trivially (since both stars must contain $x_i$). By the dimensionality requirement on $W_0$, every point in $\cZ$ has an open neighborhood with compactly-supported cohomology isomorphic to $H^\bullet R[n] = \HBM^\bullet(\R^n)$, and by the upward-closure requirement no cell lying in $\cZ$ may lie in the boundary of a cell lying outside $\cZ$. Thus, the frontier and link requirements on strata imposed by Definition \ref{def:strat} are satisfied by the connected set $\cZ \subset \cX$, whence all cells lying in $\cZ$ must belong to the same canonical $n$-stratum $\sigma$ of $\cX$. Thus, $\sigma$ contains both $w$ and $z$ as desired.
\end{proof}

Finally, note that if an arbitrary cell $y$ lies in a (not necessarily canonical) $n$-stratum $\sigma$ of $\cX$, then there must exist an $n$-cell $w \geq y$ also lying in $\sigma$ by Proposition \ref{prop:upclose}, otherwise we arrive at the contradiction $H^n\bL(y) = 0$. Thus, we have the following consequence of Proposition \ref{prop:topstrat}.
\begin{corollary}
\label{cor:topstrat}
The canonical $n$-strata of $\cX$ correspond bijectively with isomorphism classes of its $n$-cells in $\bS_0$.
\end{corollary}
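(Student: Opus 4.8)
The plan is to obtain this almost immediately from Proposition~\ref{prop:topstrat}, the only additional ingredient being the observation recorded just before the corollary statement: every canonical $n$-stratum contains an $n$-cell. (Here ``isomorphism classes of $n$-cells in $\bS_0$'' is to be read as the classes of the \emph{set} of $n$-cells under the relation ``isomorphic in $\bS_0$''.)

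First I would note that each $n$-cell $w$ of $\cX$ lies in a unique canonical $n$-stratum, which I will write as $\sigma_w$: uniqueness holds because the canonical strata partition $\cX$, and the dimension of the canonical stratum through $w$ is exactly $n$ --- at most $n$ since $\dim\cX = n$, and at least $n$ since it contains the open $n$-cell $w$. This yields a set map $w \mapsto \sigma_w$ from $n$-cells to canonical $n$-strata.

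Next I would verify that $w \mapsto \sigma_w$ descends to a bijection on isomorphism classes. The forward implication of Proposition~\ref{prop:topstrat} says isomorphic $n$-cells share a canonical $n$-stratum, so the map is constant on each isomorphism class; the reverse implication says $\sigma_w = \sigma_{w'}$ forces $w$ and $w'$ to be isomorphic in $\bS_0$, so distinct classes land in distinct strata. Hence $w \mapsto \sigma_w$ induces a well-defined injection $\overline{\phi}$ from isomorphism classes of $n$-cells into canonical $n$-strata. For surjectivity, take any canonical $n$-stratum $\sigma$; being a non-empty union of cells it contains some cell $y$, and since $\sigma$ is $n$-dimensional this $y$ lies in an $n$-stratum. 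Because $\bL^n(y)$ is the free $R$-module on the $n$-cells above $y$, the requirement $H^n\bL(y) \simeq H^n R[n] \neq 0$ forces the existence of an $n$-cell $w \geq y$, and Proposition~\ref{prop:upclose} places this $w$ in $\sigma$; thus $\sigma = \sigma_w = \overline{\phi}([w])$, so $\overline{\phi}$ is onto.

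I do not expect a real obstacle: all the mathematical content is carried by Proposition~\ref{prop:topstrat}, and the only items requiring any attention are the two dimension-bookkeeping facts used above --- that every $n$-cell sits in an $n$-dimensional stratum, and that every canonical $n$-stratum contains an $n$-cell --- the latter being precisely the remark stated immediately before the corollary.
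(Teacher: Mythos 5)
Your argument is correct and is essentially the paper's own: the paper likewise derives the corollary from Proposition~\ref{prop:topstrat} together with the remark stated immediately before it --- any cell $y$ in an $n$-stratum has $H^n\bL(y)\neq 0$, forcing an $n$-cell $w\geq y$ that lies in the same stratum by Proposition~\ref{prop:upclose} --- which is exactly your surjectivity step. The extra bookkeeping fact you flag (each $n$-cell lies in an $n$-dimensional canonical stratum) is left implicit in the paper and also follows from Proposition~\ref{prop:topstrat} applied with the two cells equal, so your write-up is just the same proof made explicit.
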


\section{Uncovering lower strata} \label{sec:low}

As before, let $\cX$ be the $n$-dimensional regular CW complex whose canonical strata we wish to discover. In this section, we will inductively construct the desired stratification  
\begin{align}
\label{eqn:Yd}
\cX = \cY_0 \supset \cY_1 \supset \cdots \supset \cY_n \supset \cY_{n+1} = \varnothing,
\end{align}
where each $\cY_\bullet \subset \cX$ is a regular CW subcomplex, so that the connected components of each difference $\cY_d - \cY_{d+1}$ are precisely the canonical $(n-d)$-strata of $\cX$. We have numbered the $\cY_\bullet$ in reverse (compare Definition \ref{def:strat}) merely to indicate the order in which they are constructed. In addition to the initial complex $\cY_0 = \cX$, the first step of our construction also requires using the sets $E_0$ and $W_0$ which were defined in (\ref{eqn:E0}) and (\ref{eqn:W0}) respectively.

\begin{definition}
\label{def:indtrip}
Given the triple $(\cY_{d-1}, E_{d-1}, W_{d-1})$, the $(n-d)$-dimensional regular CW subcomplex $\cY_{d}$ consists of all cells $y$ in $\cY_{d-1}$ for which there exists some cell $x \geq y$ in $\cY_{d-1}$ so that $(x \geq y)$ is not in the set $W_{d-1}$. (By upward closure, we may as well choose $x$ to be $y$.) Letting $\bL^\bullet_{d}:\Face\op(\cY_{d}) \to \Ch(R)$ be the local cohomology of $\cY_{d}$, define the set of face relations
\begin{align}
\label{eqn:Ed}
E_{d} = E_{d-1} \cap \set{(x \geq y) \text{ in } \Face\op(\cY_{d}) \mid H^\bullet\bL_{d}(x \geq y) \text{ is an isomorphism}},
\end{align}
and also
\begin{align}
\label{eqn:Wd}
W_{d} = W_{d-1} \cup \big\{(x \geq y) \text{ in } E_d \mid & ~ H^\bullet\bL_{d}(y) \simeq H^\bullet R[n-d], \text{ and } \nonumber \\
									                                     & (x' \geq y) \in E_{d} \text{ for all } x' \geq y \text{ in } \Face\op(\cY_{d})\big\}.
\end{align}
This produces the subsequent triple $(\cY_{d}, E_{d}, W_{d})$.
\end{definition}

Note that $W_\bullet$ is an increasing sequence of sets while both $\cY_\bullet$ and $E_\bullet$ are decreasing. 

\begin{example} To motivate the sets which appear in Definition \ref{def:indtrip}, consider the $2$-dimensional space $\cY_0$ pictured below: it is the closed cone over a wedge of two circles.
\begin{figure}[h!]
\includegraphics[scale = 0.35]{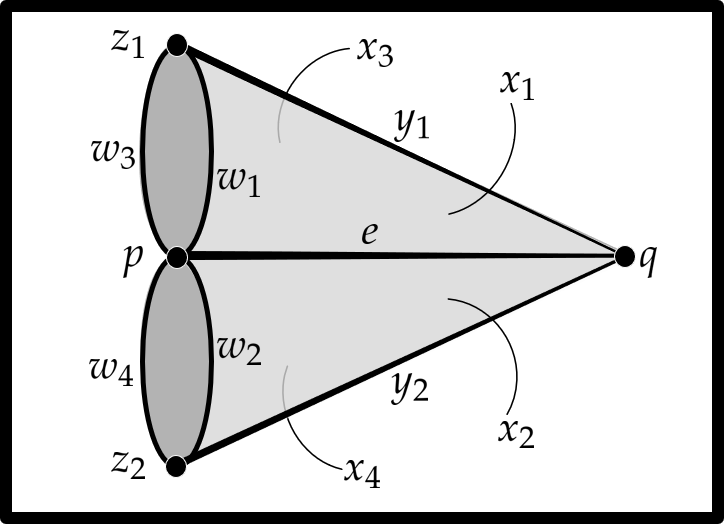}
\end{figure}

\noindent We have equipped $\cY_0$ with a regular CW decomposition consisting of four vertices ($p,q,z_*$), seven edges ($w_*, y_*, e$) and four 2-cells (all $x_*$). For convenience, let us assume that the coefficient ring $R$ is a field, so that cohomology computations reduce to knowledge of ranks of coboundary matrices. The stalk of the local cohomology cosheaf $\bL^\bullet_0$ over each of the four 2-cells is just $\bL_0^\bullet(x_*) = R[2]^\bullet$; and since each $y_*$ has two 2-cells in its open star,  we obtain the following stalks:
\[
\bL_0^\bullet(y_*) = (0 \to R \to R^2 \to 0 \to \cdots),
\]
where the only non-trivial map has rank one. It is straightforward to confirm that the based inclusions $\bL_0^\bullet(x_* \geq y_*)$ are all quasi-isomorphisms. Thus, the four face relations in $\cY_0$ of the form $x_* \geq y_*$ will be elements of $W_0$. All the other cells have the wrong local cohomology and will therefore not participate in $W_0$-relations: for instance,
\[
\bL_0^\bullet(w_*) = (0 \to R \to R \to 0 \to \cdots), 
\]
where again the nonzero map has rank 1, so the local cohomology is trivial and does not coincide with that of $R[2]^\bullet$. By upward closure, this also shows that neither the $z_*$ nor $p$ can participate in face relations of $W_0$. Similarly, the local cohomology of $e$ does not agree with that of $R[2]^\bullet$; the stalk is
\[
\bL_0^\bullet(e) = (0 \to R \to R^4 \to 0 \to \cdots),
\]
again with a rank one map. This disqualifies $q$ from participating in a face relation that lies in $W_0$. 
On the other hand, all four $(w_* \geq p)$ lie in $E_0$, so the cosheaf $\bL^\bullet_0$ is unable to distinguish generic cells lying on the two circles to the left from the singular point $p$. But once we remove all those $x_*$ and $y_*$ cells which are involved in face relations of $W_0$, we obtain a $1$-dimensional CW subcomplex $\cY_1 \subset \cY_0$ whose local cohomology correctly distinguishes $p$ from the $w_*$ and $z_*$:

\begin{figure}[h!]
\includegraphics[scale = 0.35]{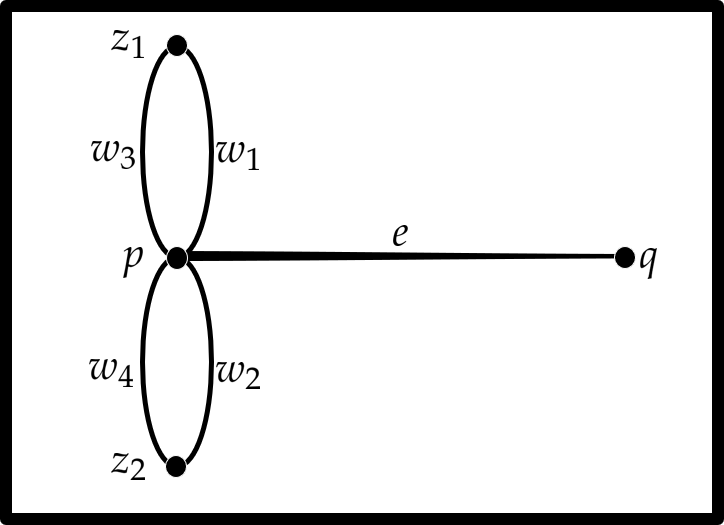}
\end{figure}

\noindent In particular, the local cohomology cosheaf $\bL^\bullet_1$ of $\cY_1$ has the following stalk over $p$:
\[
\bL_1^\bullet(p) = (0 \to R \to R^5 \to 0 \to \cdots),
\]
with the non-trivial map having rank 1; thus $\bL_1^\bullet(p)$ manifestly has different cohomology than $R[1]^\bullet = \bL_1^\bullet(w_*) \simeq \bL_1^\bullet(z_*)$. Similar top-down constructions (which measure the failure of $d$-dimensionality for decreasing $d$) have been employed since the very inception of stratification theory, dating back to the original work of Whitney \cite{whitney:65}.
\end{example}

\begin{remark}	
Two properties of the sequence $\cY_\bullet$ mentioned in Definition \ref{def:indtrip} may be inductively justified as follows, using the fact that $\cY_0$ is an $n$-dimensional regular CW complex as a base case. 
\begin{enumerate}

\item To see that $\cY_d \subset \cY_{d-1}$ is a regular CW subcomplex, note that if a cell $y$ lies in the difference $\cY_{d-1} - \cY_{d}$, then we have $(y = y) \in W_{d-1}$. And given any cell $x \geq y$ in $\cY_{d-1}$, we have $(x \geq y) \in W_{d-1}$ by upward closure, whence $(x = x) \in W_{d-1}$. Thus, $x$ also lies in the difference $\cY_{d-1} - \cY_{d}$. Since the collection of cells removed from $\cY_{d-1}$ to obtain $\cY_d$ is upward-closed with respect to the face partial order, $\cY_d$ is a regular CW subcomplex of $\cY_{d-1}$.

\item To see that $\dim \cY_d \leq (n-d)$, assume $\dim \cY_{d-1} \leq (n-d+1)$. But now, for each cell $w$ in $\cY_{d-1}$ of dimension $(n-d+1)$, we have $(w = w) \in W_{d-1}$ by Proposition \ref{prop:topcell}, so $w$ is not in $\cY_d$. Since $\cY_d$ contains no cells of dimension $(n-d+1)$, its dimension can not exceed $(n-d)$.
\end{enumerate}
\end{remark}

\begin{definition}
\label{def:scat}
For each $d > 0$ in $[n]$, the category $\bS_d$ is the localization of $\Face\op(\cX)$ about the set $W_d$ from (\ref{eqn:Wd}).
\end{definition}
Recall from Section \ref{sec:localization} that there is a universal functor $\Face\op(\cX) \to \bS_d$ for each $d$; and since $W_{d-1} \subset W_{d}$ holds by (\ref{eqn:Wd}), the universal property of localization guarantees a unique functor $\bS_{d-1} \to \bS_{d}$ which makes the following diagram commute:
\[
\xymatrixrowsep{0.25in}
\xymatrixcolsep{0.25in}
\xymatrix{
\Face\op(\cX) \ar@{->}[d] \ar@{->}[dr]& \\
\bS_{d-1} \ar@{->}[r] & \bS_d
}
\]
In fact, $\bS_{d-1} \to \bS_d$ admits a convenient explicit description: note that every $W_{d-1}$-zigzag is automatically a $W_d$-zigzag, so $\bS_{d-1}$ is a subcategory of $\bS_d$ and the  inclusion $\bS_{d-1} \inc \bS_d$ satisfies this universal property. Our next result establishes a certain $W_\bullet$-induced monotonicity for morphisms across the entire nested sequence $\bS_\bullet$; in its statement and beyond, $[p]$ denotes the set $\set{0,\ldots,p}$ for each integer $p \geq 0$.
\begin{lemma}
\label{lem:acyclicity}
Fix $d$ in $[n]$ along with cells $w$ and $z$ in $\cX$. For each $W_d$-zigzag $\zeta$ from $w$ to $z$, say
\[
\zeta = (w \geq y_0 \leq x_0 \geq \cdots \geq y_k \leq x_k \geq z),
\]
define the function $\varphi_\zeta:[k] \to [d]$ by setting
\begin{align}
\label{eqn:phi}
\varphi_\zeta(p) = \min\set{q \geq 0 \mid (x_p \geq y_p) \in W_q}.
\end{align}
Then, $\varphi_\zeta$ is monotone increasing in the sense that $\varphi_\zeta(p-1) \leq \varphi_\zeta(p)$ for all $p \geq 1$ in $[k]$.
\end{lemma}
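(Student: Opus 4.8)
The plan is to re-express $\varphi_\zeta$ entirely in terms of the lower cells $y_p$, and then to exploit the fact that $x_{p-1}$ is a common coface of $y_{p-1}$ and $y_p$. For a cell $c$ of $\cX$, set $\psi(c) = \min\{q \ge 0 \mid (c = c) \in W_q\}$. Using the upward-closure remark following Definition~\ref{def:indtrip}, one sees that $\cY_{q+1}$ consists exactly of the cells $c \in \cY_q$ with $(c=c) \notin W_q$, so that $c \in \cY_q$ if and only if $q \le \psi(c)$; and since $\cY_{n+1} = \varnothing$, every $\psi(c)$ lies in $[n]$. The first goal would be the identity $\varphi_\zeta(p) = \psi(y_p)$ for every $p \in [k]$, which reduces the lemma to the inequality $\psi(y_{p-1}) \le \psi(y_p)$. (One should also note at the outset that a backward arrow of $\zeta$ which is a bare equality may be deleted using the horizontal relations of Definition~\ref{def:locpos}, shrinking $k$; after this normalization every backward arrow genuinely lies in $W_d$, which is what forces $\varphi_\zeta$ to take values in $[d]$.)

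The technical core is some bookkeeping about the set $N_q$ of relations adjoined to $W_{q-1}$ to form $W_q$, i.e. the relations $(x \ge y)$ with $x, y \in \cY_q$, $(x \ge y) \in E_q$, $H^\bullet\bL_q(y) \simeq H^\bullet R[n-q]$, and $(x' \ge y) \in E_q$ for every coface $x' \ge y$ in $\cY_q$. Since the last three conditions involve $x$ only through the requirement $(x \ge y) \in E_q$, I expect to extract: \textbf{(a)} if $(x \ge y) \in N_q$ then $(y = y) \in N_q$, and $(x' \ge y) \in N_q$ for every coface $x' \ge y$ in $\cY_q$; and \textbf{(b)} if $(x \ge y) \in N_q$ then also $(x = x) \in N_q$. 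For (b): $\bL_q(x \ge y)$ being a quasi-isomorphism gives $H^\bullet\bL_q(x) \simeq H^\bullet R[n-q]$, and for a coface $x'' \ge x$ in $\cY_q$ the composite $\bL_q(x'' \ge y) = \bL_q(x \ge y) \circ \bL_q(x'' \ge x)$, together with the two-out-of-three property for quasi-isomorphisms applied at every stage $q' \le q$ (legitimate since $E_q \subseteq E_{q'}$ and $\bL_{q'}$ is functorial on the cells of $\cY_q \subseteq \cY_{q'}$), shows $(x'' \ge x) \in E_q$.

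Granting (a), the identity $\varphi_\zeta(p) = \psi(y_p)$ is short. If $q = \varphi_\zeta(p)$, then $(x_p \ge y_p)$ first enters $W_\bullet$ at stage $q$, hence lies in $N_q$, so (a) yields $(y_p = y_p) \in N_q$ and therefore $\psi(y_p) \le q$; conversely, if $(y_p = y_p)$ entered some $N_{q'}$ with $q' < q$, then — because $x_p$ is a coface of $y_p$ already present in $\cY_q \subseteq \cY_{q'}$ — the defining conditions of $N_{q'}$ would place $(x_p \ge y_p)$ into $N_{q'} \subseteq W_{q-1}$, contradicting minimality of $q$.

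The final step is a contradiction argument. Suppose $\psi(y_{p-1}) > \psi(y_p) =: q$. The backward arrow $(x_{p-1} \ge y_{p-1})$ first enters $W_\bullet$ at stage $\psi(y_{p-1}) \ge q+1$, so it lies in $N_{\psi(y_{p-1})}$, whose relations join cells of $\cY_{\psi(y_{p-1})} \subseteq \cY_{q+1}$; hence $x_{p-1} \in \cY_{q+1}$. But $(y_p = y_p) \in N_q$ and $x_{p-1}$ is a coface of $y_p$: if $x_{p-1} \notin \cY_q$ then $\psi(x_{p-1}) < q$, already contradicting $x_{p-1} \in \cY_{q+1}$; otherwise $x_{p-1} \in \cY_q$, and the coface clause of $(y_p = y_p) \in N_q$ gives $(x_{p-1} \ge y_p) \in E_q$, whereupon (b) gives $(x_{p-1} = x_{p-1}) \in N_q$, i.e. $\psi(x_{p-1}) \le q$, i.e. $x_{p-1} \notin \cY_{q+1}$ — contradiction. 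I expect the bookkeeping of the second paragraph — in particular checking that the two-out-of-three arguments survive the nested intersections defining $E_q$ and the stage-dependent complexes $\bL_q$ — to be the only genuinely delicate point; everything else is careful assembly.
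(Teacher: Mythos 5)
Your proof is correct and follows essentially the same route as the paper's: both arguments pin down the stage $q$ at which the backward arrow at position $p$ enters $W_\bullet$, observe that $y_p$ (and hence its coface $x_{p-1}$) is absent from $\cY_{q+1}$, and conclude that the backward arrow at position $p-1$, whose cells must lie in $\cY_s$ at the stage $s$ it is adjoined, entered no later than $q$. The only difference is packaging: where the paper expels $x_{p-1}$ from $\cY_{q+1}$ by citing face-closedness of the subcomplexes $\cY_\bullet$ (the remark after Definition \ref{def:indtrip}), you re-derive this locally via your claims (a)--(b) and the two-out-of-three argument for quasi-isomorphisms, which in fact supplies the ``whence $(x = x) \in W_{d-1}$'' step that the paper's remark leaves implicit.
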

\begin{proof}
Set $\varphi_\zeta(p) = q$ for a fixed $p$, so that $(x_{p} \geq y_{p}) \in W_q - W_{q-1}$, whence both $x_p$ and $y_p$ are cells in $\cY_q - \cY_{q+1}$. Since $x_{p-1} \geq y_p$ holds in $\cX$, the cell $x_{p-1}$ must lie in $\cY_r - \cY_{r+1}$ for some $r \leq q$ because $\cY_\bullet$ is a decreasing sequence of CW subcomplexes of $\cX$. Thus, $(x_{p-1} \geq y_{p-1}) \in W_r$, so we have
\[
\varphi_\zeta(p-1) \leq r \leq q = \varphi_\zeta(p),
\] as desired. 
\end{proof}

The cells of $\cX$ are ordered by dimension in the sense that the poset $\Face(\cX)$ is only allowed to have morphisms from higher-dimensional to lower-dimensional cells. Since morphisms in $\bS_\bullet$ are zigzags, one can not expect such a dimensional monotonicity to hold verbatim. But the $W_\bullet$-induced monotonicity from the previous lemma precludes the existence of certain morphisms in  $\bS_\bullet$.
\begin{corollary}
\label{cor:acyclicity}
Given $i \leq j$ in $[n]$ along with cells $w$ in $\cY_j$ and $z$ in $\cY_i - \cY_j$, there are no morphisms in $\bS_d$ from $w$ to $z$ for any $d$ in $[n]$.
\end{corollary}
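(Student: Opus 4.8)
The plan is to argue by contradiction. Suppose that for some $d$ in $[n]$ there is a morphism $w \to z$ in $\bS_d$; since $w \in \cY_j$ while $z \notin \cY_j$ we have $w \neq z$, so this morphism is represented by a nontrivial $W_d$-zigzag, and I will show that the monotone depth function $\varphi_\zeta$ from Lemma~\ref{lem:acyclicity} cannot simultaneously be compatible with $w$ lying in $\cY_j$ and $z$ lying in $\cY_i - \cY_j$.

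First I would isolate a few facts for repeated use. From the proof of Lemma~\ref{lem:acyclicity}: for a $W_d$-zigzag $\zeta = (w \geq y_0 \leq x_0 \geq \cdots \geq y_k \leq x_k \geq z)$ whose backward steps $(x_p \geq y_p)$ all lie in $W_d$, the value $\varphi_\zeta(p)$ is exactly the level at which $x_p$ and $y_p$ are removed from the filtration, i.e. $x_p, y_p \in \cY_{\varphi_\zeta(p)} - \cY_{\varphi_\zeta(p)+1}$. From the Remark after Definition~\ref{def:indtrip}: each $\cY_{m+1}$ is a CW subcomplex of $\cY_m$, hence closed under passing to faces, whereas the removed set $\cY_m - \cY_{m+1}$ is closed under passing to co-faces inside $\cY_m$. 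Finally, since $\cY_\bullet$ is decreasing with $\cY_0 = \cX$ and $\cY_{n+1} = \varnothing$, every cell $c$ has a well-defined removal level $\ell(c) \in [n]$, characterized by $c \in \cY_s \iff s \leq \ell(c)$.

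Next I would carry out the endpoint analysis. Put $r = \ell(z)$; since $z \in \cY_i - \cY_j$ and $\cY_\bullet$ decreases, $i \leq r \leq j-1$. Pick a representing $W_d$-zigzag; after deleting equality valleys via the horizontal relations of Definition~\ref{def:locpos}, we may assume either that it is a single forward relation $w \geq z$, or that all its backward steps are genuine $W_d$-relations, so that $\varphi_\zeta : [k] \to [d]$ is defined as in the lemma. In the first case $z$ is a face of $w$ with $z \notin \cY_{r+1}$, so $w \notin \cY_{r+1}$ because $\cY_{r+1}$ is closed under faces, contradicting $w \in \cY_j \subseteq \cY_{r+1}$. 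In the second case, $x_k \geq z$ together with $z \notin \cY_{r+1}$ and closedness of $\cY_{r+1}$ under faces gives $x_k \notin \cY_{r+1}$, hence $\varphi_\zeta(k) = \ell(x_k) \leq r \leq j-1$; monotonicity then yields $\varphi_\zeta(0) \leq \varphi_\zeta(k) \leq j-1$. Writing $q = \varphi_\zeta(0)$, we have $w \in \cY_j \subseteq \cY_q$ and $y_0 \in \cY_q - \cY_{q+1}$ with $w \geq y_0$, so closedness of the removed set under co-faces forces $w \in \cY_q - \cY_{q+1}$; but $q+1 \leq j$ gives $\cY_j \subseteq \cY_{q+1}$, so $w \in \cY_{q+1}$, a contradiction. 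Either branch is absurd, so no morphism $w \to z$ exists in any $\bS_d$.

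The genuine content lives entirely in Lemma~\ref{lem:acyclicity}; once monotonicity of $\varphi_\zeta$ is available the corollary is essentially bookkeeping. The step I expect to demand the most care is the clean handling of degenerate zigzags — removing equality valleys and dealing with the pure-forward representative $w \geq z$ — together with consistently applying the two opposite closure properties of the filtration (the subcomplex $\cY_{m+1}$ is closed under faces, while the complement $\cY_m - \cY_{m+1}$ is closed under co-faces) at the correct end of the zigzag.
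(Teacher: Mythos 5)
Your proof is correct and follows essentially the same route as the paper: dispose of the degenerate representative $w \geq z$ using the fact that $\cY_\bullet$ consists of subcomplexes, then apply the monotone function $\varphi_\zeta$ of Lemma \ref{lem:acyclicity} to a genuine $W_d$-zigzag and derive a contradiction from the bounds it must satisfy at the two endpoints. Your treatment is in fact a bit more careful at the $z$-end (using $\varphi_\zeta(k) \leq \ell(z) \leq j-1$ rather than the paper's stated bound by $i$) and in cleaning up equality valleys, but the substance is identical.
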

\begin{proof}
Note that a direct face relation $w \geq z$ can not hold in $\cX$ since $\cY_j$ is a CW subcomplex of $\cY_i$. Thus, any morphism in $\bS_d$ from $w$ to $z$ admits a genuine zigzag representative. Proceeding by contradiction, assume the existence of such a zigzag $\zeta$:
\[
w \geq y_0 \leq x_0 \geq \cdots \geq y_k \leq x_k \geq z.
\]
The face relations $w \geq y_0$ and $x_k \geq z$ in $\cX$ guarantee $j \leq \varphi_\zeta(0)$ and $\varphi_\zeta(k) \leq i$, where $\varphi_\zeta$ is the function from (\ref{eqn:phi}). The monotonicity of $\varphi_\zeta$ from Lemma \ref{lem:acyclicity} now forces $j \leq i$, and hence, $i = j$. The cell $z$ is thus constrained to lie in $\cY_i - \cY_j = \varnothing$, which yields the desired contradiction. 
\end{proof}

As a direct consequence of the preceding result, isomorphism classes in $\bS_i$ of $(n-i)$-dimensional cells from $\cY_i$ remain unchanged across all the inclusions $\bS_i \inc \bS_j$ for $i \leq j$. The next result gives convenient alternate descriptions of such isomorphism classes across the entire sequence $\bS_\bullet$.
\begin{proposition}
\label{prop:isocon}
For each $d \in [n]$, the following are equivalent. Two cells $w$ and $z$ in $\cX$
\begin{enumerate}
\item lie in the same connected component of $\cY_d - \cY_{d+1}$,
\item are connected by a $W_d$-zigzag, whose relations ($\leq$ and $\geq$) all lie in $W_d - W_{d-1}$,
\item are isomorphic in $\bS_d$ to a common $(n-d)$-dimensional cell from $\cY_d$.
\end{enumerate}
\end{proposition}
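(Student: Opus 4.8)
The plan is to prove the cycle of implications $(1)\Rightarrow(2)\Rightarrow(3)\Rightarrow(1)$, resting on two preliminaries. The first is the standard combinatorial description of connected components: for a finite regular CW complex $K$ with subcomplex $L$, two cells $a,b$ not in $L$ lie in the same connected component of $|K|-|L|$ if and only if they are joined by a chain $a=c_0,c_1,\dots,c_m=b$ of cells not in $L$ whose consecutive terms are comparable in the face poset of $K$; I would use this with $K=\cY_d$ and $L=\cY_{d+1}$. The second, and the real engine of the proof, is the structural identity
\[
W_d\setminus W_{d-1}\;=\;\set{(x\geq y)\text{ in }\Face\op(\cY_d)\mid x,y\in\cY_d-\cY_{d+1}},
\]
saying that a face relation enters $W_\bullet$ at level $d$ exactly when both its cells are cells of $\cY_d$ lying outside $\cY_{d+1}$. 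This I would obtain from three facts read off Definition \ref{def:indtrip} and the Remark following it: for cells $x\geq y$ of $\cY_q$ one has $(x\geq y)\in W_q$ if and only if $(y=y)\in W_q$ (both directions are immediate from the conditions defining $W_q$); the Remark identifies $\cY_q-\cY_{q+1}$ with $\set{y\in\cY_q\mid(y=y)\in W_q}$; and $\cY_{q+1}$ is a subcomplex of $\cY_q$, so $\cY_q-\cY_{q+1}$ is upward-closed there. Chaining these at levels $d-1$ and $d$ yields the identity.

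Granting the preliminaries, $(1)\Rightarrow(2)$ and $(2)\Rightarrow(3)$ are short. If $w$ and $z$ lie in the same component of $\cY_d-\cY_{d+1}$, a joining chain consists of cells of $\cY_d-\cY_{d+1}$ with comparable consecutive terms; each such comparability is a relation of $W_d\setminus W_{d-1}$ by the structural identity, and reorganizing the chain produces a $W_d$-zigzag from $w$ to $z$ all of whose relations lie in $W_d\setminus W_{d-1}$, which is $(2)$. Conversely, given such a zigzag, each of its forward relations lies in $W_d$ and hence is inverted in $\bS_d$ (its backward relations being inverted by construction of the localization), so the zigzag is an isomorphism $w\cong z$ in $\bS_d$; every cell it visits lies in $\cY_d-\cY_{d+1}$ by the structural identity, so by Definition \ref{def:indtrip} has $H^\bullet\bL_d\simeq H^\bullet R[n-d]$, in particular $H^{n-d}\bL_d\neq0$, which forces it to possess an $(n-d)$-dimensional co-face $v$ in $\cY_d$ (as $\bL^{n-d}_d$ is the top term of $\bL^\bullet_d$); the relation $(v\geq w)$ has target in $\cY_d-\cY_{d+1}$, so it lies in $W_d\setminus W_{d-1}$ and is inverted in $\bS_d$, giving $w\cong v$ and thus $z\cong w\cong v$, which is $(3)$.

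The crux is $(3)\Rightarrow(1)$, which I expect to be the main obstacle. Suppose $w\cong v\cong z$ in $\bS_d$ with $v$ an $(n-d)$-cell of $\cY_d$; then $\bL^\bullet_d(v)=R[n-d]^\bullet$ by Proposition \ref{prop:topcell} applied to $\cY_d$, and the same unwinding of Definition \ref{def:indtrip} as above places $v$ in $\cY_d-\cY_{d+1}$. Next I would pin down $w$: applying Corollary \ref{cor:acyclicity} to a morphism $w\to v$ excludes $w\in\cY_{d+1}$, and applying it to a morphism $v\to w$ excludes $w\notin\cY_d$, so $w\in\cY_d-\cY_{d+1}$ (and symmetrically $z\in\cY_d-\cY_{d+1}$). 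Finally, fix a zigzag representative $\zeta=(w\geq y_0\leq x_0\geq\cdots\geq y_k\leq x_k\geq v)$ of the isomorphism $w\cong v$ and invoke Lemma \ref{lem:acyclicity}: since $y_0$ is a face of $w\in\cY_d$ it lies in $\cY_d$, which forces $(x_0\geq y_0)\notin W_{d-1}$ and hence $\varphi_\zeta(0)=d$; as $\zeta$ is a $W_d$-zigzag, $\varphi_\zeta(k)\leq d$; and monotonicity of $\varphi_\zeta$ then forces $\varphi_\zeta\equiv d$, so each $(x_p\geq y_p)$ lies in $W_d\setminus W_{d-1}$, whence by the structural identity every cell $x_p,y_p$ also lies in $\cY_d-\cY_{d+1}$. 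The list $w,y_0,x_0,\dots,y_k,x_k,v$ is thus a chain of the kind appearing in the combinatorial preliminary, so $w$ and $v$ --- and by symmetry $z$ and $v$, hence $w$ and $z$ --- lie in one component of $\cY_d-\cY_{d+1}$. The real difficulty is marshalling the acyclicity results together with the bookkeeping behind the structural identity; once $\zeta$ is confined to $\cY_d-\cY_{d+1}$, the topological conclusion is routine.
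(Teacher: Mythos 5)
Your proposal is correct and takes essentially the same route as the paper: the same cycle $(1)\Rightarrow(2)\Rightarrow(3)\Rightarrow(1)$, with your ``structural identity'' simply making explicit the bookkeeping that the paper leaves implicit in Definition \ref{def:indtrip} and the proof of Lemma \ref{lem:acyclicity}, and the same coface-existence argument (nonvanishing $H^{n-d}\bL_d$ forces an $(n-d)$-dimensional co-face) producing the common cell $v$. If anything, your $(3)\Rightarrow(1)$ — pinning $w$ and $z$ inside $\cY_d-\cY_{d+1}$ via Corollary \ref{cor:acyclicity} before applying the monotonicity of $\varphi_\zeta$ — is a more careful rendering of the paper's terse ``compose zigzags $w\to v\to z$ and apply Lemma \ref{lem:acyclicity}'' step.
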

\begin{proof}
We will show (1) $\Rightarrow$ (2) $\Rightarrow$ (3) $\Rightarrow$ (1). If $w$ and $z$ lie in the same connected component of $\cY_d - \cY_{d+1}$, then there is a zigzag of cells
\[
w \geq v_0 \leq v_1 \geq \cdots \geq v_k \leq z,
\]
lying entirely in $\cY_d - \cY_{d+1}$. By Definition \ref{def:indtrip}, each face relation ($\geq$ or $\leq$) appearing in this zigzag lies in $W_{d}$, and by Lemma \ref{lem:acyclicity}, no such relation can lie in $W_{d-1}$. So by (\ref{eqn:Wd}), the cell $w$ (and indeed, each cell in this zigzag) has its local cohomology $H^\bullet\bL_{d}(v)$ isomorphic to $H^\bullet R[n-d]$. By the argument used while proving Corollary \ref{cor:topstrat}, and recalling that $\dim \cY_d \leq (n-d)$, there exists an $(n-d)$-cell $v \geq  w$ in $\cY_d$ which is isomorphic  in $\bS_d$ to $w$, and hence to every other cell in our zigzag (including $z$). Finally, if there is an $(n-d)$-cell $v$ from $\cY_d$ isomorphic to both $w$ and $z$ in $\bS_d$, then there is a $W_d$-zigzag from $w$ to $z$ obtained by composing zigzags $w \to v \to z$. By Lemma \ref{lem:acyclicity}, all cells in this composite lie in $W_d - W_{d-1}$, so $w$ and $z$ lie in the same connected component of $\cY_d - \cY_{d+1}$.   
\end{proof}

Our next result describes the canonical strata of $\cX$ as isomorphism classes in $\bS_\bullet$.
\begin{proposition}
\label{prop:strattoiso}
For each $d \in [n]$ and canonical $(n-d)$-stratum $\sigma$ of $\cX$, every cell lying in $\sigma$ is isomorphic in $\bS_d$ to some $(n-d)$-cell from $\cY_d$.
\end{proposition}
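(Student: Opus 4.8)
The plan is to argue by induction on $d$, using Proposition~\ref{prop:topstrat} (equivalently Corollary~\ref{cor:topstrat}) as the base case $d=0$, and then to reduce the inductive step to an application of the same clustering argument that proved Proposition~\ref{prop:topstrat}, but carried out inside the subcomplex $\cY_d$ with the local cohomology complex $\bL^\bullet_d$ in place of $\bL^\bullet$. The central claim to establish is that if $\sigma$ is a canonical $(n-d)$-stratum of $\cX$, then every cell of $\sigma$ actually lies in $\cY_d$, lies in fact in $\cY_d - \cY_{d+1}$, and has $H^\bullet\bL_d$-stalk isomorphic to $H^\bullet R[n-d]$; once this is known, Proposition~\ref{prop:isocon}(1)$\Leftrightarrow$(3) (applied with $\cY_d$ playing the role $\cX$ played for the top stratum) immediately yields the desired $(n-d)$-cell.

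First I would show that no cell of $\sigma$ lies in $\cY_{d-1} - \cY_d$, i.e. that $\sigma$ is disjoint from the cells removed at stages $0,\ldots,d-1$. Suppose a cell $y$ of $\sigma$ were removed at stage $e < d$: then $(y=y)\in W_e$, so by the dimensionality clause of (\ref{eqn:Wd}), $H^\bullet\bL_e(y)\simeq H^\bullet R[n-e]$. But the removed cells at stage $e$ are precisely those whose $\bL^\bullet_e$ looks like a cohomology manifold of dimension $n-e > n-d$; a cell in a canonical $(n-d)$-stratum has, by the link axiom of Definition~\ref{def:strat} (case $i=d$ there, giving that the stratum is an $(n-d)$-cohomology manifold), local cohomology of dimension $n-d$. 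The potential conflict is that $\bL_e$ is computed in $\cY_e$, not in $\cX$, so I must check compatibility: inductively, by the portion of the argument already secured for strata of dimension $> n-d$, all cells of strictly higher-dimensional canonical strata have already been removed, and $\cY_e$ agrees with $\cX$ near $\sigma$ in the relevant range of cohomological degrees (cells outside $\cY_e$ that could meet $\overline{\st}(y)$ all belong to higher strata by Proposition~\ref{prop:upclose} and the inductive hypothesis). This forces $e \geq d$, a contradiction; so $\sigma \subseteq \cY_d$. A symmetric-in-spirit check, using the fact that cells of $\cY_{d+1}$ have been flagged by $W_d$ precisely when their local cohomology is that of an $(n-d)$-manifold \emph{and} upward-closure holds, together with the frontier axiom, shows every cell of $\sigma$ lies in $\cY_d - \cY_{d+1}$ and has $H^\bullet\bL_d \simeq H^\bullet R[n-d]$ with all cofaces in $\cY_d$ also satisfying the $E_d$-condition --- so the relevant face relations are genuinely in $W_d$.

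With $\sigma \subseteq \cY_d - \cY_{d+1}$ established, connectedness of $\sigma$ (Definition~\ref{def:strat}) gives a zigzag of cells of $\sigma$ from one chosen cell to any other, and by the previous paragraph every face relation in this zigzag lies in $W_d - W_{d-1}$ (it lies in $W_d$ by (\ref{eqn:Wd}), and not in $W_{d-1}$ by Lemma~\ref{lem:acyclicity} since both endpoints are in $\cY_d - \cY_{d+1}$). Thus condition (2) of Proposition~\ref{prop:isocon} holds for any two cells of $\sigma$, hence so does condition (3): each cell of $\sigma$ is isomorphic in $\bS_d$ to a common $(n-d)$-cell from $\cY_d$, which is exactly the assertion. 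I expect the main obstacle to be the bookkeeping in the first step --- verifying that computing local cohomology inside the successively shrunk complexes $\cY_e$ rather than in $\cX$ does not corrupt the dimension count near $\sigma$ --- which is why I would set up a clean inductive hypothesis asserting simultaneously that (i) all canonical strata of dimension $> n-d$ have been completely removed by stage $d$, and (ii) for such strata the conclusion of the proposition already holds, and feed (i) into the stage-$d$ argument.
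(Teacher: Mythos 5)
Your overall architecture (induct on $d$, show all face relations inside $\sigma$ land in $W_d-W_{d-1}$, then invoke Proposition~\ref{prop:isocon}) is the paper's, but two of your key steps have genuine gaps. First, your argument that no cell of $\sigma$ is removed at a stage $e<d$ rests on the claim that a cell $y$ in a canonical $(n-d)$-stratum has ``local cohomology of dimension $n-d$'' by the link axiom. That is false as stated: the link axiom with $i=\dim\sigma$ only controls $\CBM^\bullet(\cU_p\cap\sigma)$, whereas $\bL^\bullet_e(y)$ computes the compactly-supported cohomology of $\st(y)\cap\cY_e$, which contains nearby higher-dimensional strata; for the pinch point of the introductory space $\cY$ (a canonical $0$-stratum) this cohomology is $R^2$ concentrated in degree $2$, not in degree $0$. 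What actually rules out premature removal of $y$ is not a degree count but the statement that the clusters removed at stages $e<d$ \emph{are} canonical $(n-e)$-strata, i.e.\ Proposition~\ref{prop:isotostrat} at earlier stages: if the $W_e$-conditions held at $y$ then $y$ would lie in a canonical $(n-e)$-stratum, contradicting $y\in\sigma$. Your inductive hypothesis (i)--(ii) only records one inclusion (higher strata are removed by stage $d$) and the conclusion of the present proposition for them; it does not give the converse inclusion $\cX-\cY_d\subset\{\text{strata of dimension}>n-d\}$, so your contradiction cannot be completed. The paper handles this by running a joint induction whose hypothesis is the full bijective correspondence for $j<d$ (so that, via Proposition~\ref{prop:isocon}, $\cY_j-\cY_{j+1}$ is exactly the union of canonical $(n-j)$-strata and in particular $\cY_j$ is the canonical $(n-j)$-skeleton).

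Second, your ``symmetric-in-spirit check'' skips the technical heart of the proof. To place a relation $x\geq y$ with $x,y\in\sigma$ into $W_d-W_{d-1}$ you need it in $E_d$, hence in $E_{d-1}$, i.e.\ you must show that $\bL^\bullet_j(x\geq y)$ is a quasi-isomorphism for \emph{every} $j<d$, not only that the stage-$d$ stalks have the cohomology of $R[n-d]^\bullet$ with upward closure. Your proposal never argues this. In the paper this is precisely where the ``stratified'' nature of the equivalences in Definition~\ref{def:strat} is used: after identifying $\cY_j$ with the canonical skeleton (inductive hypothesis), one writes the quasi-isomorphisms $\CBM^\bullet(\Cone\cL_{d-j-1}\times\R^{n-d})\to\CBM^\bullet(\st(x)\cap\cY_j)$ and $\to\CBM^\bullet(\st(y)\cap\cY_j)$, uses the refinability triangle together with the comparison between compactly-supported singular and cellular cochains to sandwich $\bL^\bullet_j(x\geq y)$ in a commuting diagram of quasi-isomorphisms, and concludes $(x\geq y)\in E_{d-1}$; only then do the cohomology-manifold property of $\sigma$ and Proposition~\ref{prop:upclose} (applied in $\cY_d$) supply the dimensionality and upward-closure clauses of (\ref{eqn:Wd}). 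Your closing step (connectedness of $\sigma$ plus Lemma~\ref{lem:acyclicity} and Proposition~\ref{prop:isocon}) is fine once these two gaps are repaired.
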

\begin{proof}
Let $\varnothing = \cL_{-1} \subset \cdots \subset \cL_{d-1}  = \cL$ be the link of $\sigma$ in the sense of Definition \ref{def:strat}. Assume, proceeding via induction, that for each $j < d$ the canonical $(n-j)$-strata of $\cX$ correspond to isomorphism classes in $\bS_j$ of $(n-j)$-cells from $\cY_j$. By Proposition \ref{prop:isocon}, these strata are precisely the connected components of $\cY_j - \cY_{j+1}$. Thus, for each cell $x$ lying in $\sigma$ and $j < d$, we have quasi-isomorphisms
\[
\CBM^\bullet(\Cone\cL_{d-j-1} \times \R^{n-d}) \stackrel{\simeq}{\longrightarrow} \CBM^\bullet(\st(x) \cap \cY_{j}),
\]
where $\st(x)$ the the open star of $x$ in $\cX = \cY_0$. Assuming that $x \geq y$ holds in $\cX$ for another cell $y$ lying in $\sigma$, we would like to prove that $\bL_j^\bullet(x \geq y)$ is a quasi-isomorphism. To this end, consider the following diagram in $\Ch(R)$:
\[
\xymatrixrowsep{0.28in}
\xymatrixcolsep{0.3in}
\xymatrix{
\CBM^\bullet(\Cone\cL_{d-j-1} \times \R^{n-d}) \ar@{->}[r]^-{\simeq} \ar@{->}[rd]_-{\simeq} & \CBM^\bullet(\st(y) \cap \cY_{j}) \ar@{->}[d] \ar@{->}[r]^-{\simeq} & \bL_j^\bullet(y)    \\
																																										& \CBM^\bullet(\st(x) \cap \cY_{j})  \ar@{->}[r]_-{\simeq} & \bL_j^\bullet(x) \ar@{^{(}->}[u]_-{\bL^\bullet(x \geq y)}
}
\]
\noindent The triangle to the left commutes by the refinability constraint of Definition \ref{def:strat} (note that here the downward pointing arrow is the map induced by inclusion), whereas square to the right commutes by the natural contravariant equivalence between singular and cellular compactly-supported cohomology. Being sandwiched in this commuting diagram forces $\bL_j^\bullet(x \geq y)$ to also be a quasi-isomorphism for all $j < d$, so we have $(x \geq y) \in E_{d-1}$. Being an $(n-d)$-stratum, $\sigma$ is an $R$-cohomology manifold, so there are quasi-isomorphisms 
\[
R[n-d]^\bullet \stackrel{\simeq}{\inc} \bL_d^\bullet(x) \stackrel{\simeq}{\inc} \bL_d^\bullet(y). 
\] Since $\sigma$ is a top-dimensional stratum of the subcomplex $\cY_d$, we have $(x' \geq y) \in E_d$ whenever $x' \geq y$ in $\cY_d$ by Proposition \ref{prop:upclose}. Thus, $(x \geq y) \in W_d - W_{d-1}$ by (\ref{eqn:Wd}). Since $\sigma$ is connected, any two cells of $\cX$ that it contains may be connected by a zigzag of cells lying in $\sigma$; and by the preceding argument, each face relation appearing in such a zigzag lies in $W_d - W_{d-1}$. The desired conclusion now follows from Proposition \ref{prop:isocon}.
\end{proof}

The following result establishes the converse of Proposition \ref{prop:strattoiso}.
\begin{proposition}
\label{prop:isotostrat}
For each $d$ in $[n]$, the isomorphism class in $\bS_d$ of any $(n-d)$-dimensional cell from $\cY_d$ is a canonical $(n-d)$-stratum of $\cX$.
\end{proposition}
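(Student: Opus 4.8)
The plan is to establish the converse of Proposition \ref{prop:strattoiso} by proving directly that the union of cells constituting the isomorphism class of an $(n-d)$-cell $v$ from $\cY_d$ forms a subset of $\cX$ satisfying the axioms of Definition \ref{def:strat} — and that it does so in a way which is forced, i.e.\ that this union cannot be enlarged while still being a stratum, and any finer partition would fail connectivity. First I would invoke Proposition \ref{prop:isocon} to identify this isomorphism class with a connected component of $\cY_d - \cY_{d+1}$; call it $\sigma$. The set $\sigma$ is connected by construction, and every point of $\sigma$ lies in some open star $\st(y)$ for $y$ a cell of $\sigma$, so the first order of business is to produce the link data. By the dimensionality clause defining $W_d$ in (\ref{eqn:Wd}), each such $y$ has $H^\bullet\bL_d(y) \simeq H^\bullet R[n-d]$, so $\sigma$ is an $(n-d)$-dimensional $R$-cohomology manifold inside $\cY_d$; this handles the link axiom at index $i = d$. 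For the higher indices $i > d$ — equivalently, for the behaviour of $\sigma$ inside the ambient $\cX$ rather than inside $\cY_d$ — I would build the stratified link by reading off, for a fixed cell $y$ in $\sigma$, how $\st(y) \cap \cY_j$ sits inside $\st(y) \cap \cY_{j-1}$ for $j \leq d$, using the cellular model $\bL^\bullet_j(y)$ and excision exactly as in the proof of Proposition \ref{prop:strattoiso} but run in the opposite direction.

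The key steps, in order, are: (i) fix a base cell $y$ of $\sigma$ and a small basic neighbourhood $\cU_p$ of a point $p \in y \cap \sigma$ with $\cU_p \cong \st(y)$, then set $\cL_{i-d-1}$ for $d \leq i \leq n$ to be a stratified space whose open cone times $\R^{n-d}$ models $\cU_p \cap \cX_i$ — the candidate closed subspaces $\cX_i$ of $\cX$ being $\cX_i = \cX$ for $i \geq n-d$ and, for $i < n-d$, the union of all canonical strata of dimension $\le i$, which exist and are fully described for dimensions exceeding $(n-d)$ by the inductive hypothesis carried through Propositions \ref{prop:topstrat}, \ref{prop:isocon} and \ref{prop:strattoiso}; (ii) verify the filtered, stratum-preserving and refinable conditions — filtering comes from the maps $\bL^\bullet_{j}(y) \to \bL^\bullet_{j-1}(y)$ which are quasi-isomorphisms precisely when $(y = y) \in E_{j-1}$, stratum-preservation comes from the decomposition of $\lk(y)$ according to which canonical stratum each of its cells flows into, and refinability from the fact that shrinking $\cU_p$ within a single cell changes nothing on cohomology; (iii) verify the frontier axiom, which splits into two halves: upward-closure of $\sigma$ inside $\cY_d$ follows from Proposition \ref{prop:upclose} applied to the top-dimensional stratum $\sigma$ of $\cY_d$, while the claim that any \emph{lower}-dimensional canonical stratum in the closure of $\sigma$ is entirely contained in that closure follows from Corollary \ref{cor:acyclicity} together with the inductive description of those lower strata; (iv) finally, argue \emph{coarseness} — that $\sigma$ cannot be split: any stratification refining the decomposition we have produced would break the connected set $\sigma$, and by the link axiom (index $i=d$) every point of $\sigma$ has identical local cohomology $H^\bullet R[n-d]$, so no sub-stratum of $\sigma$ could have the frontier-incompatible neighbourhood needed to separate it; and conversely $\sigma$ cannot be merged with anything of dimension $\le n-d$ without violating refinability, exactly as in Remark \ref{ex:notcohstr}.

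The induction is on $d$. The base case $d = 0$ is precisely Proposition \ref{prop:topstrat} together with Corollary \ref{cor:topstrat}: the isomorphism classes of $n$-cells in $\bS_0$ are the canonical $n$-strata. For the inductive step I would assume the statement for all $j < d$, so that $\cY_j - \cY_{j+1}$ decomposes into the canonical $(n-j)$-strata and in particular $\cY_d$ is the closed complement of all canonical strata of dimension $> n-d$; then $\sigma$, a connected component of $\cY_d - \cY_{d+1}$, is a candidate $(n-d)$-stratum for a coarsening of the skeletal stratification of $\cX$, and the argument above promotes it to a genuine canonical stratum. One subtlety worth flagging: I must confirm that the resulting ascending filtration $\cX_{-1} \subset \cX_0 \subset \cdots \subset \cX_n = \cX$, assembled from all of the $\sigma$ across all $d$, is genuinely a \emph{single} stratification — i.e.\ that the link data chosen cell-by-cell for different base cells of the same $\sigma$ are mutually compatible — but this follows because any two base cells of $\sigma$ are joined by a $W_d - W_{d-1}$ zigzag (Proposition \ref{prop:isocon}) along which the extension maps $\bL^\bullet_d(x \ge y)$ are quasi-isomorphisms, so the links agree up to the filtered quasi-isomorphism type that is all Definition \ref{def:strat} demands.

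The main obstacle I anticipate is step (iv), coarseness, and more precisely the need to rule out a hypothetical stratification of $\cX$ that is strictly coarser than ours — that is, showing ours really is the \emph{coarsest}, not merely \emph{a} valid one. The valid-stratification half is essentially bookkeeping with the link axiom and Corollary \ref{cor:acyclicity}; but minimality requires arguing that any stratum of any coarsening of the skeletal stratification, when restricted to the relevant $\cY_d$, must be a union of our $\sigma$'s — which in turn hinges on the claim that membership of a cell $y$ in $\cY_d$ and the value $H^\bullet \bL_d(y)$ are intrinsic invariants forced by the local topology of $\cX$ near $y$, independent of the stratification. Pinning down that intrinsicness cleanly — probably by an induction that peels off top strata and invokes the upward-closure of top-stratum membership (Proposition \ref{prop:upclose}) at each stage, together with the compactly-supported cohomology characterisation of $H^\bullet\bL$ via $(\overline{\st}(y), \partial\overline{\st}(y))$ — is where the real work lies.
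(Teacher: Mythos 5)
Your overall skeleton does match the paper's: induction on $d$, identification of the isomorphism class with a connected component of $\cY_d - \cY_{d+1}$ via Proposition \ref{prop:isocon}, a link built from $\st(y)\cap\cY_i$ for a base cell and transported to other cells of $\sigma$ along zigzags whose relations lie in $W_d - W_{d-1}$, and canonicity at the end from Propositions \ref{prop:strattoiso} and \ref{prop:isocon}. The genuine gap is your step (iii), the frontier axiom. What has to be proved there is a propagation statement: if a \emph{single} cell $z$ of $\sigma$ meets the closure of an already-constructed canonical $(n-i)$-stratum $\tau$ (with $i<d$), then \emph{every} cell of $\sigma$ admits a co-face from $\tau$, so that $\sigma\subset\overline{\tau}$. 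Neither of the results you cite does this: Proposition \ref{prop:upclose} is about upward-closure of membership in top-dimensional strata, and Corollary \ref{cor:acyclicity} only forbids morphisms in $\bS_\bullet$ from cells deep in the filtration $\cY_\bullet$ to cells removed earlier; neither controls which higher strata are adjacent to which cells of $\sigma$. Nor does your compatibility argument ("the stalks $\bL^\bullet_d$ along the zigzag are quasi-isomorphic") suffice: abstract agreement of local cohomology does not identify \emph{which} stratum contributes the cohomology, which is exactly what uniform adjacency requires. The paper supplies a dedicated mechanism you have no substitute for: for each $i<d$ it forms the quotient complex of cosheaves $\bK^\bullet_i = \bL^\bullet_i/\bL^\bullet_{i+1}$ restricted to $\sigma$, observes that $H^{n-i}\bK_i(z)$ detects whether the closure of an $(n-i)$-stratum meets $\sigma$ at $z$, and applies the five lemma to the long exact sequence of $0\to\bL^\bullet_{i+1}\to\bL^\bullet_i\to\bK^\bullet_i\to 0$ to conclude that all extension maps of $\bK^\bullet_i$ over $\sigma$ are quasi-isomorphisms; connectedness of $\sigma$ then propagates the non-vanishing of $H^{n-i}\bK_i$ across all of $\sigma$. (The "second half" you list, about lower-dimensional strata in $\overline{\sigma}$, is not needed at stage $d$; it is verified when those strata are constructed later in the induction.)

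Two smaller corrections. First, your candidate filtration is misstated: $\cX_i$ is not $\cX$ for $i\geq n-d$; rather $\cX_i = \cY_{n-i}$, and for $i\geq n-d$ these are precisely the subcomplexes whose strata are covered by the inductive hypothesis or are being constructed, while the required emptiness of $\cU_p\cap\cX_i$ for $i<n-d$ is automatic because $\cY_{d+1}$ is a subcomplex not containing $y$. Second, transporting the link data between base cells needs quasi-isomorphisms of $\bL^\bullet_i$ at \emph{every} level $i\leq d$ (available because $W_d - W_{d-1}\subset E_{d-1}\subset E_i$), and the backwards-pointing arrows in the zigzag must be inverted, which the paper justifies by noting the stalks are bounded complexes of free (hence projective) modules. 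Finally, the "intrinsicness" problem you flag at the end is not where the work lies: minimality is exactly the content of Proposition \ref{prop:strattoiso}, already proved, combined with Proposition \ref{prop:isocon} to rule out enlarging $\sigma$; that is all the paper invokes.
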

\begin{proof}
Proceeding once again by induction, we assume that the statement holds for all $j$ in $[d-1]$. Fix an $(n-d)$-dimensional cell $w$ in $\cY_d \subset \cX$, and let $\sigma$ be the collection of all cells which are isomorphic to $w$ in $\bS_d$. We first show that $\sigma$ satisfies the link axiom from Definition \ref{def:strat}. Writing $\st(w)$ and $\lk(w)$ for $w$'s star and link in $\cX$, for each $i \leq d$ we have the intersections
\[
\st_i(w) = \st(w) \cap \cY_{i} \quad \text{ and } \quad \lk_i(w) \cap \cY_{i},
\] which form the star and link of $w$ in $\cY_{i} \subset \cX$. Since $\st_\bullet(w)$ admits a stratified homeomorphism to the corresponding $\Cone\lk_\bullet(w) \times \R^{n-d}$, we get stratified quasi-isomorphisms of singular cochain complexes
\[
\CBM^\bullet(\Cone\lk_i(w) \times \R^{n-d}) \stackrel{\simeq}{\longrightarrow}  \CBM^\bullet(\st_i(w)), 
\]
whose codomains are, in turn, quasi-isomorphic to the local cohomology stalks $\bL_i^\bullet(w)$. If $z$ is any other cell which becomes isomorphic to $w$ in $\bS_d$, then by Proposition \ref{prop:isocon} there a zigzag from $w$ to $z$ whose face relations all lie in $W_d - W_{d-1}$:
\[
w \geq v_0 \leq v_1 \geq \cdots \geq v_{k-1} \leq v_k \geq z,
\]
and hence zigzags of quasi-isomorphisms in $\Ch(R)$ for all $i \in [d]$
\[
\bL_i^\bullet(w) {\to} \bL_i^\bullet(v_0)  {\gets} \cdots \gets \bL_i^\bullet(v_k) {\to} \bL_i^\bullet(z).
\]
Each complex in sight is bounded and free (hence projective), so the backwards-pointing maps admit quasi-inverses (see \cite[Cor 10.4.7]{weibel:94}). Thus, one may turn them around to obtain maps $\bL_i^\bullet(w) \to \bL_i^\bullet(z)$ which fit into a string of quasi-isomorphisms
\[
\CBM^\bullet(\Cone\lk_i(w) \times \R^{n-d}) \to \CBM^\bullet(\st_i(w)) 
\to \bL^\bullet_i(w) \to \bL^\bullet_i(z) 
\to \CBM^\bullet(\st_i(z)).
\]
Setting $\cL_j = \lk_{d-j-1}(w)$ for all $j \in [d-1]$, one obtains a candidate $(d-1)$-dimensional stratified space $\cL$ to serve as the link of $\sigma$. 

Turning now to the frontier axiom, for each $i < d$ let $\bK_i^\bullet$ be the quotient complex $\bL^\bullet_i/\bL^\bullet_{i+1}$, suitably restricted so that it assigns zero stalks to all cells lying outside $\sigma$. Over each cell $z$ in $\sigma$, the cohomology $H^\bullet\bK_i(z)$ coincides with the compactly-supported cohomology of the difference $\st_i(z) - \st_{i+1}(z)$. By construction, the closure of a canonical $(n-i)$-stratum $\tau$ in $\cX$ intersects $\sigma$ at the cell $z$ if and only if the cohomology $H^{n-i}\bK_i(z)$ is non-trivial. By definition, $\bK_i^\bullet$ fits into a short exact sequence (of complexes of cosheaves over $\sigma$):
\[
0 \to \bL^\bullet_{i+1} \inc \bL^\bullet_i \pro \bK_i^\bullet \to 0.
\]
Applying the five-lemma \cite[Ex 1.3.3]{weibel:94} to the resulting long exact sequence, we note that all the extension maps of $\bK_i^\bullet$ induce isomorphisms on cohomology. Thus, if $H^{n-i}\bK_i$ is non-zero at $z$, then it is also non-zero at every other cell in $\sigma$, whence every cell in $\sigma$ admits a co-face from $\tau$. Thus, $\sigma \subset \overline{\tau}$, as desired.

Finally, to see that $\sigma$ is canonical, we use Proposition \ref{prop:strattoiso} to pick an $(n-d)$-cell $w$ from $\cY_d$ so that all cells in $\sigma$ are isomorphic in $\bS_d$ to $w$. Proposition \ref{prop:isocon} now guarantees that no additional cells may be added to $\sigma$ while preserving its connectedness.
\end{proof}

The three preceding propositions establish that the sequence $\cY_\bullet \subset \cX$ from Definition \ref{def:indtrip} constitutes the canonical stratification of $\cX$. The localization functors $\Face\op(\cX) \to \bS_\bullet$ and the universal functors $\bS_\bullet \to \bS_{\bullet+1}$ arising from the inclusions $W_\bullet \subset W_{\bullet+1}$ fit into a commutative diagram
\[
\xymatrixrowsep{0.25in}
\xymatrixcolsep{0.35in}
\xymatrix{
\Face\op(\cX) \ar@{->}[d] \ar@{->}[rd] \ar@{->}[rrrd] & & &  \\
\bS_0 \ar@{->}[r] & \bS_1 \ar@{->}[r] & \cdots \ar@{->}[r] & \bS_n = \bS,
}
\]
which one may view as analogous to an injective resolution of the functor $\Face\op(\cX) \to \bS_0$ from Section \ref{sec:top}. Proceeding from left to right in the sequence above, we obtain canonical strata of decreasing dimension by examining isomorphism classes in $\bS_\bullet$. We collect these results into the following expanded version of Theorem \ref{thm:main}. 
\begin{theorem}
\label{thm:full}
Given a finite regular $n$-dimensional CW complex $\cX$, let $\cY_\bullet$ and $W_\bullet$ be as in Definition \ref{def:indtrip}, and let $\bS_\bullet$ denote the localization of the face poset $\Face\op(\cX)$ about $W_\bullet$. For each $d \in [n]$, the canonical $(n-d)$-strata of $\cX$ correspond bijectively with isomorphism classes of $(n-d)$-dimensional cells from $\cY_d$ in $\bS_k$ for any $k \geq d$.
\end{theorem}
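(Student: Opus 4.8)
The plan is to assemble Theorem~\ref{thm:full} from the three propositions already in place --- Propositions~\ref{prop:isocon}, \ref{prop:strattoiso} and \ref{prop:isotostrat} --- together with the stabilization observation recorded after Corollary~\ref{cor:acyclicity}. First I would fix $d \in [n]$ and set up the bijection at level $k = d$, and then separately argue that nothing changes as $k$ increases past $d$. So the proof splits into two parts: (i) a bijection between canonical $(n-d)$-strata of $\cX$ and isomorphism classes in $\bS_d$ of $(n-d)$-cells from $\cY_d$; and (ii) invariance of these isomorphism classes under the inclusions $\bS_d \inc \bS_{d+1} \inc \cdots \inc \bS_k$.

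For part (i) the map in one direction sends a canonical $(n-d)$-stratum $\sigma$ to the common $\bS_d$-isomorphism class of its cells: this is well-defined precisely because Proposition~\ref{prop:strattoiso} says every cell of $\sigma$ is isomorphic in $\bS_d$ to a single $(n-d)$-cell $w \in \cY_d$, and by Proposition~\ref{prop:isotostrat} that isomorphism class is itself a canonical $(n-d)$-stratum, necessarily $\sigma$ (since it contains all of $\sigma$ and $\sigma$ is already a stratum, hence maximal among connected unions of cells of that local type). In the other direction, an $(n-d)$-cell $w$ from $\cY_d$ is sent to its $\bS_d$-isomorphism class, which is a canonical $(n-d)$-stratum by Proposition~\ref{prop:isotostrat}; applying the first map to this stratum recovers $w$'s class by the same propositions. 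Thus the two assignments are mutually inverse. One should also note that every canonical $(n-d)$-stratum \emph{does} contain an $(n-d)$-cell from $\cY_d$: by Proposition~\ref{prop:isocon} it is a connected component of $\cY_d - \cY_{d+1}$, and since $\dim\cY_d \le n-d$, Corollary~\ref{cor:topstrat}'s argument (or directly, the nonvanishing of $H^{n-d}\bL_d$ on its cells) forces a top cell of $\cY_d$ to lie in it.

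For part (ii), fix an $(n-d)$-cell $w$ from $\cY_d$ and let $i \le j$ range over $[n]$ with $d \le i$. Since $W_{d} \subset W_i \subset W_j$, the categories nest as $\bS_d \inc \bS_i \inc \bS_j$, so the isomorphism class of $w$ can only grow. To see it does not, suppose $z$ becomes isomorphic to $w$ in $\bS_j$; by Proposition~\ref{prop:isocon} applied at level $j$, $z$ lies in the same connected component of $\cY_j - \cY_{j+1}$ as some top cell $w'$ of $\cY_j$ isomorphic to $w$ in $\bS_j$ --- but $w$ lies in $\cY_d$, hence in $\cY_i$ for every $i \le d$. Here is where Corollary~\ref{cor:acyclicity} does the work: a cell $w$ lying in $\cY_d$ and a cell $z$ lying in $\cY_i - \cY_d$ (for $i < d$) cannot be connected by any morphism of $\bS_k$ in either direction, so the only new cells that could join $w$'s class in passing from $\bS_d$ to $\bS_j$ would have to lie in $\cY_d$ itself; and a routine inspection of the $\varphi_\zeta$-monotonicity from Lemma~\ref{lem:acyclicity} along any $W_j$-zigzag from $w$ to such a cell $z$ forces every relation of the zigzag into $W_d - W_{d-1}$, placing $z$ in the same component of $\cY_d - \cY_{d+1}$ as $w$ by Proposition~\ref{prop:isocon}. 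Hence $z$ was already isomorphic to $w$ in $\bS_d$, and the class is stable.

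The main obstacle I anticipate is not any single deep step but the bookkeeping in part (ii): making precise the claim that a $W_j$-zigzag connecting $w \in \cY_d$ to another cell cannot ``dip below'' $\cY_d$ and come back, and that consequently all its relations already live in $W_d$. This is exactly the content for which Lemma~\ref{lem:acyclicity} and Corollary~\ref{cor:acyclicity} were designed, so the argument should go through cleanly once one is careful that the endpoints $w$ and $z$ of a zigzag constrain $\varphi_\zeta(0)$ and $\varphi_\zeta(k)$ from the correct sides; the monotonicity then pinches the whole profile into the single value $d$. Everything else is a formal consequence of the three propositions, and I would keep the written proof to essentially the two-paragraph skeleton above, citing Propositions~\ref{prop:isocon}, \ref{prop:strattoiso}, \ref{prop:isotostrat}, Lemma~\ref{lem:acyclicity} and Corollary~\ref{cor:acyclicity} at the appropriate points.
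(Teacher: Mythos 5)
Your overall decomposition is the same as the paper's: Theorem \ref{thm:full} is assembled from Propositions \ref{prop:isocon}, \ref{prop:strattoiso} and \ref{prop:isotostrat} together with the stabilization of isomorphism classes across $\bS_d \inc \bS_k$ recorded right after Corollary \ref{cor:acyclicity}, and your part (i) is essentially fine. The problem is in part (ii), where your argument as written has a hole. First, Corollary \ref{cor:acyclicity} is one-directional: it forbids morphisms from a cell of $\cY_j$ to a cell of $\cY_i - \cY_j$, but morphisms in the opposite (descending) direction do exist in general --- they encode the frontier order, cf.\ Proposition \ref{prop:frontier} --- so the claim that such cells ``cannot be connected by any morphism in either direction'' is false; it is harmless only because an isomorphism needs both directions. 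More seriously, this application only pins $z$ into $\cY_d$, and that is not enough for your next step: it is \emph{not} true that every $W_j$-zigzag from $w$ to a cell $z \in \cY_d$ has all its relations in $W_d - W_{d-1}$. If $z \in \cY_{d+1}$, descending $W_j$-zigzags from $w$ to $z$ with relations in $W_q - W_{q-1}$ for $q > d$ exist whenever $w$'s stratum has deeper strata in its closure (these are exactly the non-invertible morphisms of Proposition \ref{prop:frontier}). The monotonicity pinch needs both endpoint bounds: $\varphi_\zeta(0) \geq d$ comes from $y_0 \leq w \in \cY_d$, but $\varphi_\zeta(k) \leq d$ requires knowing $z \notin \cY_{d+1}$, which you never establish.

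The missing piece is a second, symmetric application of Corollary \ref{cor:acyclicity}: the assumed isomorphism also supplies a morphism $z \to w$ in $\bS_j$, and since $w \notin \cY_{d+1}$ for dimension reasons ($\dim \cY_{d+1} \leq n-d-1$ while $\dim w = n-d$), the corollary with the pair $(\cY_{d+1}, \cY_d - \cY_{d+1})$ rules out $z \in \cY_{d+1}$. Once $z \in \cY_d - \cY_{d+1}$, the pinch forces $\varphi_\zeta \equiv d$, every cell of the zigzag lies in $\cY_d - \cY_{d+1}$, and Proposition \ref{prop:isocon} finishes the argument as you intend (note it is cleaner to conclude condition (1) directly from the cells than to verify condition (2), since the forward relations of the zigzag need a small extra upward-closure argument to be placed in $W_d - W_{d-1}$). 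Finally, your intermediate appeal to Proposition \ref{prop:isocon} ``at level $j$'' --- placing $z$ in a component of $\cY_j - \cY_{j+1}$ --- is not valid: mere isomorphism in $\bS_j$ does not give condition (3) of that proposition, and for $j > d$ the cell $w$ cannot even lie in $\cY_j$; this sentence is not load-bearing since you pivot to the corollary afterwards, but it should be deleted.
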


The morphisms in the last category $\bS = \bS_n$ also recover the frontier partial order among canonical strata.
\begin{proposition}
\label{prop:frontier}
The frontier relation $\tau \succeq \sigma$ holds among two canonical strata of $\cX$ if and only if there is a morphism in $\bS$ from a cell lying in $\tau$ to a cell lying in $\sigma$.
\end{proposition}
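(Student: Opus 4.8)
The plan is to derive both directions from two elementary compatibilities, established once at the outset, between the relations of $\Face\op(\cX)$, the set $W_n$, and the canonical stratification that Propositions~\ref{prop:strattoiso}--\ref{prop:isotostrat} have just placed on $\cX$; throughout I write $\sigma(c)$ for the canonical stratum containing a cell $c$. First I would record \emph{Observation A}: every relation $x \geq y$ of $\Face\op(\cX)$ satisfies $\sigma(x) \succeq \sigma(y)$. This is immediate, since $y$ lies in the closed cell $\overline{x}$ and hence in $\overline{\sigma(x)}$, so $\sigma(y)$ meets $\overline{\sigma(x)}$, and the frontier axiom for the canonical stratification --- precisely the portion of the argument carried out inside the proof of Proposition~\ref{prop:isotostrat} --- forces $\sigma(y) \subseteq \overline{\sigma(x)}$. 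Next I would record \emph{Observation B}: every relation $(x \geq y)$ in $W_n$ satisfies $\sigma(x) = \sigma(y)$. Taking $q$ least with $(x \geq y) \in W_q$, so that $(x \geq y) \in W_q - W_{q-1}$ (legitimate because $W_{q-1} \subseteq W_q$ by Definition~\ref{def:indtrip}), the argument of Lemma~\ref{lem:acyclicity} shows that $x$ and $y$ are both cells of $\cY_q - \cY_{q+1}$; the connected set $x \cup y$ then lies in a single connected component of $\cY_q - \cY_{q+1}$, which is a canonical $(n-q)$-stratum by the identification of canonical strata with the connected components of the differences $\cY_\bullet - \cY_{\bullet+1}$ established above. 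Both observations hold trivially when the relation is an equality.

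For the \emph{"if"} direction I would argue as follows. Given a morphism in $\bS = \bS_n$ from a cell $w$ in $\tau$ to a cell $z$ in $\sigma$, the case $\tau = \sigma$ is handled by reflexivity of $\succeq$, so assume $\tau \neq \sigma$ and fix a zigzag representative
\[
w \geq y_0 \leq x_0 \geq y_1 \leq \cdots \geq y_k \leq x_k \geq z ;
\]
by Definition~\ref{def:locpos} every backward arrow $x_p \geq y_p$ lies in $W_n$ (or is an equality), while every forward arrow is a relation of $\Face\op(\cX)$. Applying Observation~A to the forward arrows and Observation~B to the backward arrows assembles the chain
\[
\tau = \sigma(w) \succeq \sigma(y_0) = \sigma(x_0) \succeq \sigma(y_1) = \cdots = \sigma(x_k) \succeq \sigma(z) = \sigma ,
\]
and transitivity of the frontier partial order delivers $\tau \succeq \sigma$.

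For the \emph{"only if"} direction, suppose $\tau \succeq \sigma$. When $\tau = \sigma$ the identity morphism on any cell of $\tau$ suffices, so assume $\tau \neq \sigma$, whence $\sigma \subseteq \overline{\tau}$. Since $\cX$ is a finite regular CW complex, $\overline{\tau}$ equals the subcomplex $\bigcup_{x \in \tau} \overline{x}$, and each closed cell $\overline{x}$ is the union of $x$ with its faces. Picking any cell $z$ in $\sigma$, the open cell $z$ lies in $\overline{\tau}$, so $z$ is a face of some cell $x$ in $\tau$; as $z$ and $x$ lie in distinct strata, $x \geq z$ is a genuine relation of $\Face\op(\cX)$, and its image under the localization functor $\Face\op(\cX) \to \bS$ is the required morphism from a cell of $\tau$ to a cell of $\sigma$.

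The hard part will not be the logic --- both implications are short bookkeeping once Observations~A and~B are available --- but making sure those two observations are genuinely licensed by what precedes: Observation~A leans on the frontier axiom of the canonical stratification, which is proved inside Proposition~\ref{prop:isotostrat}, while Observation~B leans on the filtration property $W_{d-1} \subseteq W_d$ together with the identification of canonical strata with connected components of the differences $\cY_\bullet - \cY_{\bullet+1}$. Provided these are cited correctly, no new topology or categorical machinery is needed beyond what the preceding sections supply.
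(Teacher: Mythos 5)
Your proposal is correct and follows essentially the same route as the paper: the easy direction is witnessed by a single face relation $w \geq z$ pushed through the localization functor, and the hard direction walks a zigzag representative, using the argument of Lemma~\ref{lem:acyclicity} to show that each $W$-relation stays inside one canonical stratum and the frontier axiom (established in Proposition~\ref{prop:isotostrat}) to show that each forward face relation can only descend in $\succeq$. Your Observations~A and~B are exactly the two ingredients the paper packages via the monotone function $\varphi_\zeta$ and the descending chain $\tau \succeq \tau_0 \succeq \cdots \succeq \tau_k \succeq \sigma$.
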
 
\begin{proof}
If $\tau \succeq \sigma$ holds, then there are cells $w$ and $z$ in $\tau$ and $\sigma$ respectively so that $w \geq z$ in $\Face\op(\cX)$. This face relation includes into $\bS$ as a morphism from $w$ to $z$. Conversely, given cells $w$ and $z$ in $\tau$ and $\sigma$ respectively, assume the existence of a $W$-zigzag
\[
\zeta = (w \geq y_0 \leq x_0 \geq \cdots \geq y_k \leq x_k \geq z)
\]
and let $\varphi_\zeta:[k] \to [n]$ be the associated monotone function from Lemma \ref{lem:acyclicity}. By construction, this zigzag contains cells in canonical strata $\tau_i$ of dimension $n-\varphi_\zeta(i)$ for all $i$ in $[k]$. If $\varphi_\zeta(i) = \varphi_\zeta(i+1)$ then $\tau_i = \tau_{i+1}$, otherwise $\tau_i \succ \tau_{i+1}$ (because $x_i$ is a co-face of $y_{i+1}$). Thus, there is a descending sequence of canonical strata 
\[
\tau \succeq \tau_0 \succeq \tau_1 \succeq \cdots \succeq \tau_k \succeq \sigma,
\]
so we obtain $\tau \succeq \sigma$ as desired.
\end{proof}

The argument above shows that the zigzag paths which form morphisms in $\bS$ may remain indefinitely in a single stratum or descend to lower-dimensional adjacent strata, but they can never ascend to higher strata. In other words, $\bS$ is a (cellular, $1$-categorical) version of the {\em entrance path category} associated to the canonical stratification of $\cX$ --- see \cite[Def 3.1]{nanda:15}, \cite[Sec 7]{treumann:09} or \cite[Sec 2]{woolf:09}.  

\section{Algorithms} \label{sec:comp}

From a computational perspective, the local cohomology $\bL^\bullet$ of a finite regular CW complex enjoys the obvious, but enormous, advantage of being {\em local}. One only ever needs to construct cochain complexes corresponding to open stars of cells (rather than holding the entire complex in system memory). Since each such star-complex may be processed  independently of the others, the construction of $\bL^\bullet$ and the computation of $H^\bullet\bL$ are inherently distributable operations. 

\subsection{Subroutines}

There are several avenues for efficiently storing a CW complex for the purpose of computing its cohomology: one may employ a {\em Hasse graph}, whose vertices are cells and whose edges, usually adorned with an orientation, connect neighboring cells across codimension one. Alternately, one may directly store (sparse avatars of) {\em coboundary matrices} over $R$, whose nonzero entries encode adjacencies between cells of codimension one. In the presence of extra structure -- for instance, if the complex involved is simplicial or cubical -- the additional rigidity affords access to highly optimized data structures such as simplex trees or bitmaps. Rather than tethering the prospective implementer to a fixed and possibly inconvenient representation, we will present all our algorithms at a high level and work directly with the poset of cells. 

\begin{table}[h!]
\centering
{\bf Algorithm: }{\tt UpSet}$(P,u,v)$ \label{alg:getupset} \\
{\bf In: }A finite poset $P$ and elements $u, v \in P$ \\
{\bf Out: } The subposet $P_v^u$ of elements $\geq v$ but not $\geq u$ \\
{
\begin{tabular}{c|l}
\hline
01 & {\bf initialize} an empty {\em queue} $Q$ of elements \\
02 & {\bf add} $v$ to $Q$ \\
03 & {\bf while} $Q$ is nonempty \\
04 & \tab {\bf remove} $w$ from $Q$ \\
05 & \tab {\bf if} $w$ is not in $P_v^u$ \\
06 & \tab \tab {\bf add} $w$ to $P_v^u$ \\
07 & \tab \tab {\bf for each} element $x > w$ in $P$ with $x \not\geq u$ \\
08 & \tab \tab \tab {\bf add} $x$ to $Q$ (uniquely) \\
09 & {\bf return} $P_v^u$ \\
\hline
\end{tabular}
}
\end{table}

Our first subroutine {\tt UpSet} employs a minor modification of breadth-first search on a finite poset $P$ to extract the subposet of all elements which lie above a fixed element $v$ but {\em not} above a fixed element $u$. In all instances of interest here, our input $P$ will be the face poset $\Face\op(\cX)$ of a finite regular CW complex $\cX$, while $u$ and $v$ will be adjacent cells across codimension one: 
\[
u >_1 v \text{ if } u > v \text{ and } \dim u - \dim v = 1.
\] By definition, the output $P_v^u$ in such cases is precisely the subposet of $\Face\op(\cX)$ containing cells which lie in the difference of open stars $\st(v) - \st(u)$. We also reserve the right to set $u = \varnothing$, so that line 07 does not check $x$ against $u$ and the output is just the poset of cells in $\st(v)$.

The next required subroutine is {\tt Cohom}, which accepts a poset of cells as input and returns the sequence of $R$-modules corresponding to the cohomology of the associated cochain complex. When $R$ is a field, the cohomology modules are all vector spaces and in this case the output can just be a sequence of integers storing their dimensions. If $R$ is the ring of integers, then one has to encode the torsion subgroups (if any) in addition to the ranks of the free parts in every dimension. In either case, since the efficient computation of (co)homology has been extensively discussed elsewhere \cite{curry:ghrist:nanda:16}, \cite{hmmn:14}, \cite{henselman:ghrist:16}, \cite{mischaikow:nanda:13}, we will treat this subroutine as a black box and not explicitly write it down here. 

\subsection{Description}

Our main algorithm {\tt StratCast} accepts as input the poset of cells in a finite $n$-dimensional regular CW complex $\cX$ and uses the constructions of Definition \ref{def:indtrip} to assign each cell $x$ a number $\codim(x)$ in $[n] = \{0,1,\cdots,n\}$ so that $x$ lies in a canonical stratum of dimension $n-\codim(x)$. Once codimensions have been determined for all cells, finding the actual canonical strata reduces to the simple task of locating connected components of a fixed codimension.  

\begin{table}[h!]
\centering
{\bf Algorithm: }{\tt StratCast}$(\Face\op(\cX))$ \label{alg:stratcast} \\
{\bf In: }The face poset of a finite $n$-dim regular CW complex \\
{\bf Out: } Assigns canonical codimensions to cells \\
{
\begin{tabular}{c|l}
\hline
01 & {\bf for each} $d$ in $(0,1, \ldots, n-1, n)$ \\
02 & \tab {\bf for each} cell $w$ in $\Face\op(\cX)$ \hfill $\fiber$ {\em dist} \\
03 & \tab \tab {\bf assign} $h^\bullet(w) = $ {\tt Cohom}({\tt UpSet}($\Face\op(\cX),\varnothing,w$)) \\
04 & \tab {\bf for each} $(x >_1 y)$ in $\Face\op(\cX)$ with $\ell(x >_1 y) = d-1$ \hfill $\fiber$ {\em dist} \\
05 & \tab \tab {\bf assign} $c^\bullet(x,y) = $ {\tt Cohom}({\tt UpSet}($\Face\op(\cX),y,x$)) \\
06 & \tab \tab {\bf if} $c^\bullet(x,y)$ is trivial \\
07 & \tab \tab \tab {\bf assign} $\ell(x >_1 y) = d$ \\
08 & \tab {\bf assign} $\codim(z) = d$ to each $(n-d)$-cell $z$ in $\Face\op(\cX)$ \\
09 & \tab {\bf for each} $i$-cell $u$ in $\Face\op(\cX)$ with $i$ in $(n-d-1,\ldots, 1, 0)$  \hfill $\fiber$ {\em dist} \\
10 & \tab \tab {\bf if} $h^{n-d}(u) \simeq R$ and all other $h^\bullet(u) = 0$ \\
11 & \tab \tab \tab {\bf if} $\codim(v) = d$ and $\ell(v >_1 u) = d$ for all $v >_1 u$ in $\Face\op(\cX)$ \\
12 & \tab \tab \tab \tab {\bf assign} $\codim(u) = d$ \\
13 & \tab {\bf remove} all cells from $\Face\op(\cX)$ with $\codim = d$ \\
\hline
\end{tabular}
}
\end{table}

The main loop (in line 01) increments the current codimension $d$ from $0$ up to $n$; as it executes each iteration, the cells lying on $(n-d)$-dimensional canonical strata are identified and removed from $\cX$, so that in the $d$-th iteration we pass from $\cY_d$ to $\cY_{d+1}$ in the language of Definition \ref{def:indtrip}. There are three secondary loops (at lines 02, 04 and 09); all three are trivially distributable across several processors, as indicated by the {\em dist} comments in those lines. The first loop in line 04 is easiest to explain: it uses our subroutines {\tt UpSet} and {\tt Cohom} to compute the compactly-supported cohomology of each open star $\st(w) \cap \cY_d$ where $w$ is a cell in $\cY_d \subset \cX$. 

The second loop from line 04 computes whether or not inclusions of open stars $\st(x) \subset \st(y)$ for $x >_1 y$ induce isomorphisms on compactly-supported cohomology by examining their cokernels. In other words, we use the fact that the map induced by $\bL_d^\bullet(x >_1 y)$ (namely, the inclusion $\st(x) \cap \cY_d \inc \st(y) \cap \cY_d$) on compactly-supported cohomology is an isomorphism if and only if the difference $(\st(y) - \st(x)) \cap \cY_d$ has trivial compactly-supported cohomology. We begin by assigning to each face relation $(x >_1 y)$ the number $\ell(x >_1 y) = -1$. As the algorithm executes, $\ell(x >_1 y)$ gets incremented to the largest integer $k$ so that $(x >_1 y)$ lies in $E_k$, where $E_\bullet$ denotes the sets from (\ref{eqn:Ed}). Since this loop is completely independent of the loop in line 02, it may be executed simultaneously with that loop. 

The third loop, which spans lines 09 through 12, must be executed after the first two intermediate loops have terminated. It uses the cohomology groups $h^\bullet$ computed by the loop in line 02 and the $\ell$-values computed by the loop in line 04 to select cells lying on canonical strata of dimension $(n-d)$ via (\ref{eqn:Wd}). Note that line 10 ensures the correct dimensionality and line 11 enforces upward closure. Line 13 removes all cells lying on canonical $(n-d)$-strata from $\cY_d$, so we are left with $\cY_{d+1}$ and the outer loop of line 01 enters its next iteration.

\subsection{Complexity}

The intermediate loops in line 02 and line 04 of {\tt StratCast} require breadth-first search (for constructing open stars via {\tt UpSet}) and matrix diagonalization through row and column operations (for computing cohomology with {\tt Cohom}). Let $m$ be the number of cells in $\cX$, and define the additional {\em star size} complexity parameter
\begin{align}
\label{eqn:starsize}
p = \max \set{ \text{number of cells in } \st(x) \mid x \text{ is a cell in } \cX}
\end{align}
(it suffices to take the maximum over zero-dimensional cells, since those have the largest stars). The worst-case complexity of computing cohomology of any open star, or a difference of open stars, via row and column operations is thus $O(p^3)$ provided we incur a constant cost performing ring operations in $R$. The cost of running {\tt Cohom} clearly dominates the $O(p^2)$ cost of executing breadth-first search in {\tt UpSet}. Since the loops of line 02 and 04 may be executed in a completely distributed fashion, their combined cost within a single iteration of the outer loop of line 01 is $O(p^3)$.

The third intermediate loop (on line 09) may be reinterpreted as two nested loops: an outer $i$-loop which decrements dimension from $(n-d-1)$ to $0$, and an inner $u$-loop which examines all remaining cells of dimension $i$. Since we require knowledge of {\em codim} values of all cells $v >_1 u$ while processing each $u$, only the $u$-loop is actually distributable. During each iteration of the $u$-loop, one incurs an $O(p)$ cost while checking all remaining cells $v >_1 u$ in line 11, and an $R$-dependent cost of testing $R$-module isomorphisms in line 10. At least when $R$ is a finite field, the rationals or the integers, checking whether a given module has rank one or zero incurs $O(1)$ cost. Thus, in typical cases we expect line 11 to dominate the burden of executing the $u$-loop. Noting that the $i$-loop runs at most $n$ times regardless of $d$, each iteration of the loop on line 09 incurs a computational cost of $O(np)$. 

Since all cells with {\em codim} $ = d$ are identified on lines 08 and 12 in a given iteration of the outer $d$-loop of {\tt StratCast}, we assume that removing them from $\cX$ incurs no additional cost. Putting all our estimates together, each iteration of the $d$-loop incurs a worst-case cost of $O(p^3 + np)$. Noting that this loop executes exactly $(n+1)$ times, the total complexity of running {\tt StratCast} -- assuming that the number of available processors exceeds the number of cells $m$ plus the maximum number $mp$ of codimension-one face relations in $\cX$ -- is $O\left((n+1)(p^3+np)\right)$. And once {\tt StratCast} has terminated, one may find all desired canonical strata of $\cX$ by computing connected components of cells with the same {\em codim} value. This is linear in the number of cells in $\cX$, so we obtain the following result.

\begin{proposition}
\label{prop:complexity}
Let $\cX$ be an $n$-dimensional regular CW complex with $m$ cells and star size $p$ as in (\ref{eqn:starsize}). The time complexity of computing its canonical stratification with coefficients in a (commutative, unital) ring $R$ via {\tt StratCast} is 
\[
O\left((n+1)(p^3+np) + m\right),
\]
provided that 
\begin{enumerate}
\item ring operations in $R$ incur $O(1)$ cost,
\item isomorphism-testing against $0$ and $R$ in $\Mod(R)$ is $O(1)$, and
\item the number of available processors exceeds $m(p+1)$.
\end{enumerate}
\end{proposition}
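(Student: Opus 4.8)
The plan is to account separately for the worst-case cost of each nested loop in {\tt StratCast}, exploiting the parallelism granted by hypothesis (3) to collapse the two independent intermediate loops to the cost of a single task. First I would record the structural fact underpinning every estimate: each subcomplex $\cY_d$ of Definition \ref{def:indtrip} is obtained from $\cX$ by deleting cells, so the open star of a cell in $\cY_d$ sits inside its open star in $\cX$; hence the star size $p$ from (\ref{eqn:starsize}) remains a uniform bound at every stage, and no coboundary matrix handed to {\tt Cohom} has more than $p$ rows or columns.

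Next I would bound the subroutines. For {\tt UpSet}, breadth-first search visits each cell of the relevant star at most once and spends $O(p)$ per cell scanning cofaces, for $O(p^2)$ total. For {\tt Cohom}, row-and-column reduction of coboundary matrices of size at most $p$ costs $O(p^3)$ under hypothesis (1), which dominates the preceding {\tt UpSet} call. Thus one task of the line-02 loop (a single cell $w$) and one task of the line-04 loop (a single relation $x >_1 y$) each cost $O(p^3)$; since these loops comprise $m$ cell-tasks and at most $mp$ relation-tasks, all mutually independent (they only read $\Face\op(\cX)$ and write disjoint slots), hypothesis (3) lets them all run concurrently, so their combined contribution to one iteration of the outer $d$-loop is $O(p^3)$.

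For the line-09 loop I would follow the text's decomposition into an outer $i$-loop (dimension $n-d-1$ down to $0$, at most $n$ iterations, necessarily sequential since line 11 consults the freshly-assigned $\codim$ values of cells $v >_1 u$) and an inner distributable $u$-loop, each of whose tasks costs $O(p)$ to scan cofaces plus $O(1)$ to test $h^\bullet(u)$ against $0$ and $R$ via hypothesis (2); running the $u$-tasks in parallel, this loop contributes $O(np)$ per $d$-iteration. Charging line 13 to the final $O(m)$ term, each $d$-iteration costs $O(p^3+np)$, and as the $d$-loop runs $n+1$ times, {\tt StratCast} proper costs $O\bigl((n+1)(p^3+np)\bigr)$. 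Extracting the strata afterwards is a connected-component search on cells of equal codimension, linear in the cells and codimension-one relations of $\cX$, which I would absorb into an additive $O(m)$, giving the claimed bound.

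The main obstacle I expect is not any single estimate but certifying the legitimacy of the parallel dispatch: one must verify that the task graphs of the line-02 and line-04 loops are genuinely dependency-free, that the $u$-loop on line 09 parallelizes despite the $i$-loop's sequential dependency, and that the peak number $m + mp = m(p+1)$ of simultaneous tasks stays within the processor budget of hypothesis (3); once these are in place, the remaining content is the routine $O(p^3)$ accounting for dense linear algebra over $R$.
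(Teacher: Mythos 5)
Your proposal is correct and follows essentially the same route as the paper: bound {\tt UpSet} by $O(p^2)$ and {\tt Cohom} by $O(p^3)$, collapse the distributable loops of lines 02 and 04 to $O(p^3)$ per $d$-iteration using the processor budget $m(p+1)$, bound the line-09 loop by $O(np)$ via the sequential $i$-loop and parallel $u$-loop, multiply by the $n+1$ iterations of the outer loop, and add $O(m)$ for the final connected-component extraction. The only cosmetic differences (charging line 13 to the $O(m)$ term rather than assuming free removal, and explicitly noting that $p$ bounds star sizes in every $\cY_d$) do not change the argument.
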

The first two conditions above are satisfied by typical choices of $R$ (such as the integers or finite fields). In practice, one expects to have $m \gg (n+1)(p^3+np)$, so the observed cost of computing canonical strata is essentially linear in the size of $\cX$.

\bibliographystyle{abbrv}
\bibliography{stratbib}

\begin{thebibliography}{10}

\bibitem{bendich2}
P.~Bendich, S.~Mukherjee, and B.~Wang.
\newblock Local homology transfer and stratification learning.
\newblock In {\em Proceedings of the 23rd Annual Symposium on Discrete
  Algorithms}, pages 1355--1370, 2012.

\bibitem{bredon:97}
G.~Bredon.
\newblock {\em Sheaf Theory}.
\newblock Springer, 1997.

\bibitem{carlsson:09}
G.~Carlsson.
\newblock Topology and data.
\newblock {\em Bulletin of the American Mathematical Society}, 46(2):255--308,
  2009.

\bibitem{curry:13}
J.~Curry.
\newblock Sheaves, cosheaves and applications.
\newblock {\em arXiv:1303.3255v2 [math.AT]}, 2013.

\bibitem{curry:16}
J.~Curry.
\newblock Dualities between cellular sheaves and cosheaves.
\newblock {\em Journal of Pure and Applied Algebra}, 222(4):966--993, 2018.

\bibitem{curry:ghrist:nanda:16}
J.~Curry, R.~Ghrist, and V.~Nanda.
\newblock Discrete {M}orse theory for computing cellular sheaf cohomology.
\newblock {\em Foundations of Computational Mathematics}, 16(4):875--897, 2016.

\bibitem{dwyer:kan:80}
W.~Dwyer and D.~Kan.
\newblock Calculating simplicial localizations.
\newblock {\em Journal of Pure and Applied Algebra}, 18:17--35, 1980.

\bibitem{edelsbrunner:harer:10}
H.~Edelsbrunner and J.~Harer.
\newblock {\em Computational Topology -- an Introduction}.
\newblock American Mathematical Society, Providence, RI, 2010.

\bibitem{eilenberg:zilber:53}
S.~Eilenberg and J.~Zilber.
\newblock On products of complexes.
\newblock {\em American Journal of Mathematics}, 75:200--204, 1953.

\bibitem{gabriel:zisman:67}
P.~Gabriel and M.~Zisman.
\newblock {\em Calculus of fractions and homotopy theory}.
\newblock Number~35 in Ergebnisse der Mathematik und ihrer Grenzgebiete.
  Springer-Verlag, 1967.

\bibitem{getz:goresky:12}
J.~Getz and M.~Goresky.
\newblock {\em Hilbert Modular Forms with Coefficients in Intersection Homology
  and Quadratic Base Change}.
\newblock Birkh\"auser, 2012.

\bibitem{ghrist:16}
R.~Ghrist.
\newblock {\em Elementary Applied Topology}.
\newblock Createspace, 2016.

\bibitem{goresky:macpherson:80}
M.~Goresky and R.~MacPherson.
\newblock Intersection homology theory.
\newblock {\em Topology}, 2(19):135 -- 162, 1980.

\bibitem{goresky:macpherson:83}
M.~Goresky and R.~MacPherson.
\newblock Intersection homology {II}.
\newblock {\em Inventiones Mathematicae}, 71:77--129, 1983.

\bibitem{goresky:macpherson:88}
M.~Goresky and R.~MacPherson.
\newblock {\em Stratified Morse Theory}, volume~14 of {\em Ergebnisse Der
  Mathematik Und Ihrer Grenzgebiete}.
\newblock Springer-Verlag, 1988.

\bibitem{hmmn:14}
S.~Harker, K.~Mischaikow, M.~Mrozek, and V.~Nanda.
\newblock Discrete {M}orse theoretic algorithms for computing homology of
  complexes and maps.
\newblock {\em Foundations of Computational Mathematics}, 14(1):151--184, 2014.

\bibitem{henselman:ghrist:16}
G.~Henselman and R.~Ghrist.
\newblock Matroid filtrations and computational persistent homology.
\newblock {\em arXiv:1606.00199 [math.AT]}, 2016.

\bibitem{kirwan:88}
F.~Kirwan and J.~Woolf.
\newblock {\em An Introduction to Intersection Homology Theory}.
\newblock CRC Press, 2006.

\bibitem{mfdlrn:08}
J.~A. Lee and M.~Verleysen.
\newblock {\em Nonlinear dimensionality reduction}.
\newblock Springer-Verlag, 2008.

\bibitem{mischaikow:nanda:13}
K.~Mischaikow and V.~Nanda.
\newblock Morse theory for filtrations and efficient computation of persistent
  homology.
\newblock {\em Discrete and Computational Geometry}, 50(2):330--353, 2013.

\bibitem{nanda:15}
V.~Nanda.
\newblock Discrete {M}orse theory and localization.
\newblock {\em Journal of Pure and Applied Algebra}, 223(2):459--488, 2019.

\bibitem{nanda:tamaki:tanaka:16}
V.~Nanda, D.~Tamaki, and K.~Tanaka.
\newblock Discrete {M}orse theory and classifying spaces.
\newblock {\em Advances in Mathematics}, 340:723--790, 2018.

\bibitem{compgeo}
F.~Preparata and M.~I. Shamos.
\newblock {\em Computational Geometry: An Introduction}.
\newblock Texts and Monographs in Computer Science. Springer, 1985.

\bibitem{rourke:sanderson:99}
C.~Rourke and B.~Sanderson.
\newblock Homology stratifications and intersection homology.
\newblock {\em Geometry and Topology Monographs}, 2:455--472, 1999.

\bibitem{shepard:85}
A.~Shepard.
\newblock {\em A Cellular Description of the Derived Category of a Stratified
  Space}.
\newblock Brown University PhD Thesis, 1985.

\bibitem{tamaki:16}
D.~Tamaki.
\newblock Cellular stratified spaces.
\newblock In A.~Darby, J.~Grbic, and L.~Zhi, editors, {\em Combinatorial And
  Toric Homotopy: Introductory Lectures}, number~35 in Lecture Notes Series,
  Institute For Mathematical Sciences, National University Of Singapore. World
  Scientific, 2017.

\bibitem{treumann:09}
D.~Treumann.
\newblock Exit paths and constructible stacks.
\newblock {\em Compositio Mathematica}, 145:1504--1532, 2009.

\bibitem{weibel:94}
C.~Weibel.
\newblock {\em Homological Algebra}.
\newblock Cambridge University Press, 1994.

\bibitem{weibel:13}
C.~Weibel.
\newblock {\em The K-Book: an Introduction to Algebraic K-theory}.
\newblock Number 145 in Graduate Studies in Mathematics. AMS, 2013.

\bibitem{weinberger:94}
S.~Weinberger.
\newblock {\em The Topological Classification of Stratified Spaces}.
\newblock University of Chicago Press, 1994.

\bibitem{whitney:65}
H.~Whitney.
\newblock Tangents to an analytic variety.
\newblock {\em Annals of Mathematics}, 81:469--549, 1965.

\bibitem{woolf:09}
J.~Woolf.
\newblock The fundamental category of a stratified space.
\newblock {\em Journal of Homotopy and Related Structures}, 4(1):359--387,
  2009.

\end{thebibliography}

\end{document}